\newcommand{\wa}{\hat}
\newcommand{\cA}{\mathcal{A}}
\newcommand{\cB}{\mathcal{B}}
\newcommand{\cE}{\mathcal{E}}
\newcommand{\cF}{\mathcal{F}}
\newcommand{\cH}{\mathcal{H}}
\newcommand{\cL}{\mathcal{L}}
\newcommand{\cR}{\mathcal{R}}
\newcommand{\cBH}{\mathcal{BH}}
\newcommand{\rC}{\mathrm{C}}
\newcommand{\sA}{\mathscr{A}}
\newcommand{\sB}{\mathscr{B}}
\newcommand{\sE}{\mathscr{E}}
\newcommand{\sF}{\mathscr{F}}
\newcommand{\sH}{\mathscr{H}}
\newcommand{\Si}{\Sigma} 
\newcommand{\bC}{\mathbb{C}}
\newcommand{\bT}{\mathbb{T}}
\newcommand{\bM}{\mathbb{M}}
\newcommand{\bN}{\mathbb{N}}
\newcommand{\bR}{\mathbb{R}} 
\newcommand{\bU}{\mathbb{U}}
\newcommand{\bZ}{\mathbb{Z}}
\newcommand{\si}{\sigma}
\newcommand{\eps}{\varepsilon}
\newcommand{\e}{\epsilon}
\DeclareMathAlphabet\EuFrak{U}{euf}{m}{n}	
\SetMathAlphabet\EuFrak{bold}{U}{euf}{b}{n}	
\newcommand {\ovl}{\overline}
\newcommand{\ad}{\mathrm{ad}}
\author{\textsc{Giuseppe Ruzzi$^{1}$ and Ezio Vasselli$^{2}$}
\footnote{Both the authors are supported by the  EU network ``Noncommutative Geometry" MRTN-CT-2006-0031962.}\\
  \null\\
\small{$^{1}$Dipartimento di Matematica, Universit\`a di Roma ``Tor Vergata'',}\\
\small{Via della Ricerca Scientifica, I-00133 Roma,  Italy.}  \\
\small{\texttt{ruzzi@mat.uniroma2.it}} \\[5pt]
\small{$^{2}$Dipartimento di Matematica, Universit\`a di Roma ``La Sapienza'',}\\
\small{Piazzale Aldo Moro 5, I-00185 Roma, Italy.}\\
 \small{\texttt{ ezio.vasselli@gmail.com  }}\\[20pt]
}
\date{}
\title{\textsc{The $K$-homology of nets of $C^*$-algebras}}
\begin{document}
\maketitle

\begin{abstract}
Let $X$ be a space, intended as a possibly curved space-time,
and $\cA$ a precosheaf of $C^*$-algebras on $X$.
Motivated by algebraic quantum field theory,
we study the Kasparov and $\Theta$-summable $K$-homology of $\cA$
interpreting them in terms of the holonomy equivariant $K$-homology 
of the associated $C^*$-dynamical system.
This yields a characteristic class for $K$-homology cycles of $\cA$
with values in the odd cohomology of $X$, 
that we interpret as a generalized statistical dimension.
\end{abstract}

\tableofcontents
\markboth{Contents}{Contents}
\newpage


  \theoremstyle{plain}
  \newtheorem{definition}{Definition}[section]
  \newtheorem{theorem}[definition]{Theorem}
  \newtheorem{proposition}[definition]{Proposition}
  \newtheorem{corollary}[definition]{Corollary}
  \newtheorem{lemma}[definition]{Lemma}

  \theoremstyle{definition}
  \newtheorem{remark}[definition]{Remark}
    \newtheorem{example}[definition]{Example}

\theoremstyle{definition}
  \newtheorem{ass}{\underline{\textit{Assumption}}}[section]


\numberwithin{equation}{section}

\section{Introduction.}

Let $X$ be a topological space with base $K$.
A \emph{precosheaf of $\rC^*$-algebras} over $K$ 
is given by a pair $(\cA,\jmath)_K$, where $\cA = \{ \cA_Y \}_{Y \in K}$ is a family of $\rC^*$-algebras
and $\jmath \ $ is a family of *-monomorphisms
\[
\jmath_{Y'Y} : \cA_Y \to \cA_{Y'}
\ \ : \ \
\jmath_{Y''Y} \ = \ \jmath_{Y''Y'} \circ \jmath_{Y'Y} \ \ , \ \ Y \subseteq Y' \subseteq Y'' \ .
\]
These objects have been intensively studied during the last fifty years
in the setting of algebraic quantum field theory:
the fundamental idea is that, for any $Y \in K$, $\cA_Y$ is the $\rC^*$-algebra of quantum observables localized within $Y$.\\
\indent In the initial approach of Haag, Kastler and Araki $X$ is the Minkowski space-time
and $K$ is upward directed under inclusion. This is a technically relevant property,
as it implies that there is a Hilbert space $H$ such that
$\cA$ can be realized as a family of $C^*$-subalgebras of $B(H)$
(in technical terms, we say that $\cA$ has a faithful \emph{Hilbert space representation}).\\
\indent Now, in general relativity and conformal theory we have space-times where the base of interest
is not upward directed, and non-trivial interactions between the topology of $X$ and the geometry of $(\cA,\jmath)_K$ appear.
At the mathematical level, an interesting point is that in general it is not possible to represent 
a precosheaf on a fixed Hilbert space (\cite{RV12}), 
whilst the more general notion of representation twisted by a unitary cocycle 
(that in the sequel we simply call \emph{representation})
works well both in terms of existence results (\cite{RV12,RV12a})
and in describing generalized quantum charges (\cite{BR09,BFM09,Vas14}).\smallskip

The present paper is addressed to the study of the $K$-homology of $(\cA,\jmath)_K$,
on the ground of the above notion of representation.\smallskip

Actually Fredholm modules on precosheaves of $\rC^*$-algebras 
have already been object of study in (algebraic) quantum field theory,
both at the level of $K$-homology (\cite[\S 6]{Lon01}) 
and the one of $\theta$-summable spectral triples (see \cite[Chap. 4.9.$\beta$]{Con}, \cite{JLO88,CHKL10} and related references),
with the motivation that the supercharge operator (an odd square root of the Hamiltonian)
has a physically interesting index. 
In the above-cited references Fredholm modules are based on \emph{Hilbert space} representations
thus, on the light of the previous considerations, it is of interest to study the generalized 
notion of \emph{Fredholm $(\cA,\jmath)_K$-module}, 
based on the use of representations in the above sense.

A crucial point for our approach is that any $\rC^*$-precosheaf $(\cA,\jmath)_K$ defines a 
$\rC^*$-dynamical system for the fundamental group of $X$,
\[
\Pi := \pi_1(X) \ \ , \ \ \alpha : \Pi \to {\bf aut}A \ ,
\]
that we call the \emph{holonomy dynamical system of $(\cA,\jmath)_K$}, 
having the property that covariant representations
of $(A,\alpha)$ are in one-to-one correspondence with representations of $(\cA,\jmath)_K$
{\footnote{
In particular, Hilbert space representations of $(\cA,\jmath)_K$ correspond to \emph{invariant} representations of $(A,\alpha)$,
that are those of the type
$\pi : A \to B(H)$ with $\pi \circ \alpha_g = \pi$, $\forall g \in \Pi$.}}.
Our main result (Theorem \ref{thm.kg}) shows that Fredholm $(\cA,\jmath)_K$-modules 
are classified by the equivariant $K$-homology $K^0_\Pi(A)$ and that,
in particular, any cycle in $K^0_\Pi(A)$ arises from a Fredholm $(\cA,\jmath)_K$-module.

As a consequence of our result we obtain an explicit link between the $\rC^*$-precosheaf structure and the geometry of the underlying space.
In fact, studying the index map of Fredholm $(\cA,\jmath)_K$-modules we obtain $K$-theory classes of locally constant vector bundles,
and this allows us to define a characteristic class
\[
CCS : \sF(\cA,\jmath)_K \to \bZ \oplus H^{odd}(X,{\mathbb{R/Q}}) \ ,
\]
where 
$\sF(\cA,\jmath)_K$ is the set of Fredholm $(\cA,\jmath)_K$-modules
and
$H^{odd} := \oplus_k H^{2k+1}$ is the odd cohomology of $X$ (\S \ref{B2}).
The feature that the odd cohomology appears instead of the even one is due to the locally constant structure
of the involved vector bundles, which, as well-known, are better described by the Cheeger-Chern-Simons character rather than the usual Chern character.
In cases of physical interest (as the conformal sphere, order two de Sitter space-time and anti-de Sitter space-times)
we prove that $CCS$ is "surjective" (\S \ref{B3}), showing that our $K$-homology does indeed capture non-trivial invariants of $X$.
This solves a point arose in \cite{RV1}, where it is proved that
Fredholm $(\cA,\jmath)_K$-modules yield continuous families of Fredholm operators which,
nevertheless, \emph{forget} the information of the holonomy defined by the underlying 
representation of $(\cA,\jmath)_K$, from which the cohomology classes of $CCS$ are induced.

\smallskip

When $(\cA,\jmath)_K$ is a $\rC^*$-precosheaf generated by quantum fields in the Minkowski space-time and the Fredholm module is defined by a supercharge,
$CCS$ takes values in $\bZ$ and can be interpreted, as argued by Longo (\cite{Lon01}), as the statistical dimension, 
an important invariant related to the statistics of particles which in this way acquires a property of homotopy invariance 
that remains hidden in the original definition.
In \S \ref{B4} we give a variant of the argument of \cite[\S 6]{Lon01} for generic space-times $X$:
without using supercharges, whose existence and technical properties are problematic in space-times with at least two spatial dimensions,
we construct Fredholm modules associated to sectors with non-trivial holonomy in the sense of \cite{BR09,Vas14},
obtaining an index with values in $\bZ \oplus H^{odd}(X,{\mathbb{R/Q}})$.

\smallskip

Finally, we consider the notion of \emph{net of spectral triples}, generalizing,
in the sense of representations, the one in \cite{CHKL10}. As in the case of Fredholm modules,
we prove that there is an equivalence between the category of nets of spectral triples
and the one of $\Pi$-equivariant spectral triples (Theorem \ref{thm.spec3}).
Thus a net of spectral triples yields a cycle in the equivariant entire cyclic cohomology
defined in \cite{KL91,KKL91}, whose pairing with $K$-theory takes values in the ring of central functions of the fundamental group.

\smallskip

We will define in a forthcoming paper the $KK$-theory of 
a (not necessarily unital) precosheaf of $\rC^*$-algebras, 
coherent with the notion of $K$-homology that we study here (\cite{RVK}).
This will yield a new approach for the $K$-theory of precosheaves of $\rC^*$-algebras 
(already studied in \cite{CCHW,CCH}) and, in a different scenario,
for computing $K$-theoretic invariants of non-simple $\rC^*$-algebras, 
on the research line started by Kirchberg \cite{Kir}.

\section{Preliminaries.}
\label{A}

To make the present paper self-contained we recall some background material
on nets of $\rC^*$-algebras. References for further details are \cite{RV12,RRV09,Ruz05}.

\subsection{Homotopy of posets.}
\label{A0}

Although the applications we have in mind concern with partially ordered sets (\emph{posets}) arising as a base, ordered under inclusion, for the topology of a space, many of the results given in the present paper hold for an abstract  poset,  i.e.  a non-empty set $K$ endowed with an order relation $\leq$. We shall denote the elements of $K$ mainly by the Latin letters $a,o$ and variations of these. \\
\indent  An important invariant of $K$ is the simplicial set $\Si_*(K)$ described in \cite{Rob90,Ruz05,RRV09}. For our aims it is enough to describe the first two degrees of this set. The set $\Si_0(K)$ of $0$-simplices (``points") is nothing but that $K$. Concerning the set $\Si_1(K)$ of 1-simplices, 
any $|b|\in K$ and any ordered pair  $(\partial_0b,\partial_1b)$ of elements of $K$ fulfilling the relation $\partial_0b,\partial_1b\leq |b|$ define a $1$-\emph{simplex} $b$. 
We shall use the notation 
\[
b:=(|b|;\partial_0b,\partial_1b) \ , 
\]
and call $\partial_0b,\partial_1b$ and $|b|$, respectively, the 0-\emph{face}, the 1-\emph{face} and  the \emph{support} of $b$. 
The intuitive content of $b$ is that it is a segment starting at the ``point" $\partial_1b$
and ending at $\partial_0b$. The \emph{opposite} of  $b$ is the 1-simplex $\overline{b}$  having 
the same support as $b$ and inverted faces,  i.e.\ $\ovl b:=(|b|;\partial_1b,\partial_0b)$.\\
\indent A \emph{path}   from $a$ and to $o$,  written $p:a\to o$,   
is a finite ordered sequence  $p := b_n * \cdots * b_1$ of 1-simplices satisfying the relations 
\[
\partial_1b_1=a \ \ , \ \ \partial_0b_n=o \ \ , \ \  \partial_1 b_{i+1} = \partial_0 b_i \ , \ \mbox{ for }  i = 1 , \ldots , n-1  \ .
\]
The \emph{opposite} of $p$ is the path $\ovl  p: o\to a$ defined by 
$\ovl p := \ovl b_1 * \ldots * \ovl b_n$.
A \emph{loop over $a$} is any path of the type $p:a\to a$.  A poset $K$ is said to be \emph{pathwise connected} 
if for any pair $a,o\in K$ there is a path $p:a\to o$. Since in the present paper we shall deal only with pathwise connected posets,  \emph{from now on 
we assume that $K$ is pathwise connected}.\\   
\indent There is an equivalence relation on the set of paths of $K$ which is  
called \emph{homotopy} and written $p \sim q$. The quotient
\[
\pi_1^a(K) \ := \ \{ p : a \to a \} / \sim \ , \qquad a\in K \ ,
\]
is called the \emph{homotopy group} of $K$. Due to pathwise-connectedness,
$\pi_1^a(K)$ does not depend, up to isomorphism, on the choice of the base-point $a$. This isomorphism class is the \emph{fundamental group} 
of $K$, and the poset is said to be \emph{simply connected} whenever the fundamental group is trivial.  In particular, when $K$ is a base of 
simply and arcwise connected open sets for the topology of a space $X$, that is, $K$ is a \emph{good base},
$\pi_1^a(K)$ is isomorphic to the fundamental group $\pi_1(X)$ (see \cite{Ruz05}).\\
\indent A notion  that will be frequently used in the following is that of a path-frame.  Given $a \in K$,
a \emph{path-frame over $a$}  is a choice of paths 
\[
P_a := \{ p_{oa} : a \to o \ | \ o\in K \}
\]
subjected to the condition that  $p_{aa}$ is homotopic to the trivial path $\iota_a:=(a; a,a)$ i.e.\ the 1-simplex having all the faces and the support equal to $a$. 
For convenience we denote the opposite path of $p_{oa}$ by $p_{ao}$.

\subsection{Nets of $\rC^*$-algebras and $\rC^*$-net bundles.}
\label{A1}

A \emph{precosheaf} of $\rC^*$-algebras over $K$ is given by a pair $(\cA,\jmath)_K$,
where $\cA = \{ \cA_a \}_{a \in K}$ is a family of unital $\rC^*$-algebras and 
$\jmath_{a'a} : \cA_a \to \cA_{a'}$, $a \leq a'$,
is a family of unital *-monomorphisms, \emph{the inclusion maps},  fulfilling the relations
\begin{equation}
\label{net:rel}
\jmath_{a''a} = \jmath_{a''a'} \circ \jmath_{a'a}
\  , \qquad 
a \leq a' \leq a''
\ .
\end{equation}
In the present paper we shall adopt the standard terminology in algebraic quantum field theory
and use the term \emph{net} instead of precosheaf.

A \emph{morphism} of nets $\phi : (\cA,\jmath)_K \to (\cB,y)_K$ is a family of *-morphisms
$\phi_a : \cA_a \to \cB_a$, $a \in K$,
intertwining the inclusion maps,
\begin{equation}
\label{def.netmor}
y_{a'a} \circ \phi_a \ = \ \phi_{a'} \circ \jmath_{a'a}
\ , \qquad  
a \leq a'
\ .
\end{equation}
When $\cB_a \equiv B$ and $y_{a'a} \equiv id_B$ for all $a \leq a'$ 
(that is, $(\cB,y)_K$ is the \emph{constant net bundle})
we say that $\phi$ is a morphism from $(\cA,\jmath)_K$ to the $\rC^*$-algebra $B$ and write
\[
\phi : (\cA,\jmath)_K \to B \ .
\]
Similar definitions can be given taking other categories instead of the one of $\rC^*$-algebras.
In the present paper we shall consider 
nets of Banach and Hilbert spaces, with inclusion maps defined by isometric, linear maps,
and
nets of *-algebras, with inclusion maps given by *-monomorphisms (in this case we do not have a norm).\smallskip

A net $(\cA,\jmath)_K$ is said to be a \emph{net bundle} whenever each $\jmath_{a'a}$ is a *-isomorphism; 
to economize in notation in this case we write
\[
\jmath_{aa'} \, := \, \jmath_{a'a}^{-1} \, : \, \cA_{a'} \to \cA_a \  , \qquad  a \leq a' \ .
\]
Net bundles admit an equivalent description in terms of dynamical systems. 
Since the inclusion maps of a net bundle $(\cA,\jmath)_K$ are $^*$-isomorphisms, we can extend them first to 1-simplices by defining
\begin{equation}
\label{net:simplex}
\jmath_b:= \jmath_{\partial_0b |b|}\circ \jmath_{|b|\partial_1b} \ , \qquad b\in\Si_1(K) \ , 
\end{equation}
and then to paths,
\begin{equation}
\label{net:paths}
\jmath_p  := \jmath_{b_{n}}\circ\cdots\circ \jmath_{b_2}\circ \jmath_{b_1}  \ ,\qquad p= b_n*\cdots*b_2*b_1
%
%
\ .
\end{equation}
Clearly,  $\jmath_p:\cA_{a}\to \cA_{o}$ is a $^*$-isomorphism  for any path  $p:a\to o$. 
Furthermore,  the map $p\mapsto \jmath_p$ preserves path composition and, by (\ref{net:rel}), factorizes through the homotopy equivalence relation.  
Thus, picking $a\in K$ and defining $\cA_* := \cA_a$ we get the action $\jmath_* : \pi_1^a(K) \to \textbf{aut}\cA_*$ defined by 
\begin{equation}
\label{def.alphap}
\jmath_{*,[p]} := \jmath_p   \ , \qquad [p]\in\pi^a_1(K) \  , 
\end{equation}
that we call the {\em holonomy action of} $(\cA,\jmath)_K$. 
The triple $(\cA_*,\pi^a_1(K),\jmath_*)$ is called the \emph{holonomy dynamical system} of the net bundle. 
If $\phi:(\cA,\jmath)_K\to (\cA',\jmath')_K$ is a morphism, then using (\ref{def.netmor}) and 
setting $\phi_*:=\phi_a$  we immediately find that $\phi_* :\cA_* \to \cA'_* $ is a $\pi^a_1(K)$-equivariant  $^*$-morphism.  
This construction is, up to isomorphism, independent of the choice of the base-point $a$.  \\ 
\indent An interesting point is that {\em every}  $\rC^*$-dynamical system 
$(A,\pi_1^a(K),\alpha)$ comes associated to a $\rC^*$-net bundle.
For, we pick a path frame $P_a := \{ p_{oa} \ | \ o\in K\}$ and set
\begin{equation}
\label{rep:2a}
g_{o'o} \ := \ [ p_{ao'} * (o';o',o) * p_{oa} ] \  \in \pi_1^a(K)
\ ,  \qquad  o\leq o'
\end{equation}
(here $(o';o',o)$ is the 1-simplex whose support is $o'$ and whose 0- and 1-faces are, respectively, $o'$ and $o$).  
Then we define the fibres as 
$\cA_{*,o} := A$ for any $o \in K$,
and the inclusion morphisms 
\[
\alpha_{*,o'o} := \alpha_{g_{o'o}} \ , \qquad o\leq o' \ .
\] 
Homotopy invariance of $\alpha$ implies that $\alpha_*$ fulfils the net relations; 
so $(\cA_*,\alpha_*)_K$ is a $\rC^*$-net bundle. Moreover, it is easily verified that
a $\pi_1^a(K)$-equivariant morphism 
$\eta : (A,\alpha) \to (A',\alpha')$
defines the morphism
$\eta_* : (\cA_*,\alpha_*)_K \to (\cA'_*,\alpha'_*)_K$, $\eta_{*,a} := \eta$, $a \in K$.
This leads to the following results (see \cite{RV12} for details).
\begin{theorem}
\label{thm.netdyn}
The following assertions hold:
\begin{itemize}
\item[(i)] The functor
\[
(\cA,\jmath)_K \mapsto (\cA_*,\pi_1^a(K),\jmath_*) \ \ , \ \ \phi \mapsto \phi_*
\]
defines an equivalence from the category of net bundles over $K$ to the one of
$\rC^*$-dynamical systems over $\pi_1^a(K)$, whose inverse, up to isomorphism, is
\[
(A,\pi_1^a(K),\alpha) \mapsto (\cA_*,\alpha_*)_K \ \ , \ \ \eta \mapsto \eta_* \ .
\]
\item[(ii)] If $K$ is simply connected then any net bundle $(\cA,\jmath)_K$ is trivial i.e.\, it is isomorphic to 
the constant net bundle $\cA_a$.
\end{itemize} 
\end{theorem}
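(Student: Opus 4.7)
The plan is to prove part (i) by exhibiting natural isomorphisms between the two compositions of the stated functors and the identity; part (ii) will then follow at once. Write $F$ for the assignment $(\cA,\jmath)_K \mapsto (\cA_a,\pi_1^a(K),\jmath_*)$ and $G$ for the assignment $(A,\pi_1^a(K),\alpha) \mapsto (\cA_*,\alpha_*)_K$ constructed with the aid of a path-frame $P_a$. The text preceding the theorem already shows that both are well defined on objects, using homotopy invariance of $p \mapsto \jmath_p$ on one side, and the homotopy consistency of the cocycle $g_{o'o}$ of (\ref{rep:2a}) on the other; functoriality on morphisms is analogous, via (\ref{def.netmor}).

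For $F \circ G \cong \mathrm{id}$, start from $(A,\pi_1^a(K),\alpha)$ and form $G(A,\alpha) = (\cA_*,\alpha_*)_K$, whose fibres are all equal to $A$. Applying $F$ at the base point $a$ returns the fibre $A$, and the induced action is recovered from path composition: since $p_{aa} \sim \iota_a$ the identity $g_{aa} = [\iota_a]$ acts as the identity, and the compositional nature of $p \mapsto (\alpha_*)_p$ forces its value on any loop class at $a$ to be $\alpha_{[p]}$, so the reconstructed action agrees with $\alpha$. Naturality in equivariant morphisms $\eta$ is immediate from $\eta_{*,a} = \eta$.

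For $G \circ F \cong \mathrm{id}$, given $(\cA,\jmath)_K$, take its holonomy system $F(\cA,\jmath) = (\cA_a, \pi_1^a(K), \jmath_*)$, fix a path-frame $P_a$, and form $GF(\cA,\jmath)_K$, whose fibres are all $\cA_a$ and whose inclusions are $\jmath_{*, g_{o'o}}$. The candidate natural isomorphism is
\[
\psi_o := \jmath_{p_{oa}} : \cA_a \longrightarrow \cA_o, \qquad o \in K,
\]
a family of $^*$-isomorphisms since $(\cA,\jmath)_K$ is a net bundle. The required compatibility
\[
\jmath_{o'o} \circ \psi_o \;=\; \psi_{o'} \circ \jmath_{*, g_{o'o}}, \qquad o \leq o',
\]
unfolds by (\ref{net:simplex})--(\ref{def.alphap}) into
\[
\psi_{o'} \circ \jmath_{*, g_{o'o}} \;=\; \jmath_{p_{o'a}} \circ \jmath_{p_{ao'}} \circ \jmath_{(o';o',o)} \circ \jmath_{p_{oa}},
\]
and then $\jmath_{p_{o'a}} \circ \jmath_{p_{ao'}} = \jmath_{p_{o'a} * p_{ao'}} = \mathrm{id}$, since $p_{o'a} * p_{ao'}$ is a loop at $o'$ obtained from a path and its opposite, hence homotopic to $\iota_{o'}$; while $\jmath_{(o';o',o)} = \mathrm{id}_{\cA_{o'}} \circ \jmath_{o'o} = \jmath_{o'o}$ directly from (\ref{net:simplex}). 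Naturality in net-bundle morphisms is a short diagram chase using (\ref{def.netmor}).

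Assertion (ii) is then immediate: if $K$ is simply connected, $\pi_1^a(K) = \{e\}$, so every $\rC^*$-dynamical system over it is just a $\rC^*$-algebra $A$ with trivial action, and its image under $G$ is the constant net bundle with fibre $A$; by (i), every net bundle over $K$ is isomorphic to one of this form. The main technical hurdle is the intertwining identity for $\psi$, which crucially combines homotopy invariance of $p \mapsto \jmath_p$ with the path-frame normalisation $p_{aa} \sim \iota_a$; a subsidiary point, easily handled, is that different path-frames give conjugate-isomorphic reconstructions, so that the quasi-inverse $G$ is determined only up to natural isomorphism.
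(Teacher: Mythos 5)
Your proposal is correct and follows essentially the same route as the paper, which only sketches the two constructions and defers the verification to \cite{RV12}: your natural isomorphism $\psi_o = \jmath_{p_{oa}}$, the telescoping of the cocycles $g_{o'o}$ via the normalisation $p_{aa}\sim\iota_a$, and the derivation of (ii) from (i) are exactly the intended completion of that sketch.
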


\

A \emph{section} of a net bundle $(\cB,\jmath)_K$ is a family $T = \{ T_o \in \cB_o \}$
such that 
$\jmath_{o'o}(T_o) = T_{o'}$, $\forall o \leq o'$.
The set of sections is denoted by $S(\cB,\jmath)_K$. Using the above considerations it is easily verified
that there is a one-to-one correspondence
\begin{equation}
\label{eq.SEC}
S(\cB,\jmath)_K \to \cB_*^\jmath \ \ , \ \ T \mapsto T_* := T_a \ ,
\end{equation}
where $\cB_*^\jmath$ is the fixed-point algebra of $\cB_*$ under the $\pi_1^a(K)$-action.

\

Finally, it is clear that the notion of net bundle applies to other categories:
in particular, if $(\cH,V)$ is a net of Banach (Hilbert) spaces where each $V_{o'o}$ is invertible (unitary), 
then we say that $(\cH,V)$ is a \emph{Banach (Hilbert) net bundle}. 
It is worth observing that all the results of the present section, stated for $\rC^*$-net bundles, apply to Banach (Hilbert) net bundles.

\subsection{Representations and the enveloping net bundle.}
\label{A2}

Let $(\cH,U)_K$ denote a Hilbert net bundle;
using the adjoint action, we define the $\rC^*$-net bundle
$(\cBH,\ad U)_K$.
Given a net of $\rC^*$-algebras $(\cA,\jmath)_K$, a \emph{representation} on $(\cH,U)_K$
is by definition a morphism
\[
\pi : (\cA,\jmath)_K \to (\cBH,\ad U)_K \ .
\]
We say that $(\cA,\jmath)_K$ is injective (non-degenerate) whenever it admits a 
faithful (non-vanishing) representation. In particular, given the Hilbert space $H$, a morphism $\pi : (\cA,\jmath)_K \to B(H)$ 
is called a \emph{Hilbert space representation}: this is the notion usually used 
in algebraic quantum field theory.\smallskip

The equivalence between  $\rC^*$-net bundles and  holonomy $\rC^*$-dynamical systems induces, up to unitary equivalence, 
a bijective correspondence between representations of the net $(\cA,\jmath)_K$ 
and $\pi_1^a(K)$-covariant representations of the corresponding holonomy dynamical system: 
for, if $\pi$ is a representation of $(\cA,\jmath)_K$ on $(\cH,U)_K$ then the pair $(\pi_*, U_*)$, where 
\begin{equation}
\label{rep:1}
\pi_*:=\pi_a \ \ , \ \ U_{*,[p]}:= U_p  \ , \qquad p : a \to a \ , 
\end{equation}
is a $\pi^a_1(K)$-covariant representation of $(\cA_*,\pi^a_1(K),\jmath_*)$.
Conversely, if $(\eta,V)$ is a $\pi^a_1(K)$-covariant representation of 
$(\cA_*,\pi^a_1(K),\jmath_*)$ on a Hilbert space $H$, then using (\ref{rep:2a}) and setting 
\begin{equation}
\label{rep:2}
\eta_{*,o}:= \eta  \ \  ,  \  \   
\cH_{*,o}:= H  \ \  ,  \ \  
V_{*,\tilde o o} := V_{g_{\tilde oo}}
\ \ ,  \ \ \forall o \leq \tilde o  \ , 
\end{equation}
we get a representation $\eta_*$ of 
$(\cA_{**},\jmath_{**})_K \simeq (\cA,\jmath)_K$ 
on the Hilbert net bundle $(\cH_*,V_*)_K$.
As a consequence $\rC^*$-net bundles are injective nets:
in fact, faithful covariant  representations of  holonomy dynamical systems induce faithful representations of the associated net bundle,
and $\rC^*$-dynamical systems have faithful covariant representations. This, in turn, implies that a net is injective if, and only if, 
it admits an injective morphism into a $\rC^*$-net bundle. \\
\indent For any net of $\rC^*$-algebras $(\cA,\jmath)_K$ there exists 
a $\rC^*$-net bundle $(\bar \cA,\bar \jmath)_K$  and a morphism 
\begin{equation}
\label{eq.env}
\e : (\cA,\jmath)_K \to (\bar \cA,\bar \jmath)_K \ ,
\end{equation}
fulfilling the universal property of lifting in a unique way any morphism  from 
$(\cA,\jmath)_K$ to a $\rC^*$-net bundle  $(\cB,y)_K$, as follows:
\begin{equation}
\label{eq.env.u}
\xymatrix{
   (\bar \cA,\bar \jmath)_K
   \ar[r]^{\psi^\uparrow} 
&  (\cB,y)_K
\\ (\cA,\jmath)_K
   \ar[u]^{\e} 
   \ar[ru]_{\psi} 
&  
}
\end{equation}
$\e$  and $(\bar \cA,\bar \jmath)_K$ are  called, respectively, the \emph{canonical morphism} and the {\em enveloping $\rC^*$-net bundle} of the net.  
Assigning the enveloping net bundle is a functor: so any morphism
$\phi : (\cA,\jmath)_K \to (\cA', \jmath')_K$ 
is lifted to a morphism  
$\bar \phi : (\bar \cA,\bar \jmath)_K \to (\bar\cA',\bar \jmath')_K$ 
of the corresponding enveloping net bundles such that  $\bar \phi \circ \e = \e' \circ \bar \phi$, 
%
%
where $\e'$ is the canonical morphism of $(\cA',\jmath')_K$.\\
\indent On these grounds, the \emph{holonomy $\rC^*$-dynamical system} $( \, A,\pi_1^a(K) \,)$ of $(\cA,\jmath)_K$ 
is defined as the holonomy dynamical system associated with $(\bar \cA,\bar \jmath)_K$.
By construction, covariant representations of $(A,\pi_1^a(K))$ and representations
of $(\cA,\jmath)_K$ are in one-to-one correspondence. 
An explicit description of $A$,
that is the unique, up-to-isomorphism, fibre of $(\bar \cA,\bar \jmath)_K$,
will be needed only in \S \ref{C}, see (\ref{def.spec3.1}).
Here we note that using the path evaluations (\ref{net:paths}),
and recalling the definitions (\ref{eq.env}) and (\ref{rep:1}), 
given $a \in K$ and a representation 
$\pi : (\cA,\jmath) \to (\cBH,\ad U)_K$
we have the equalities
\begin{equation}
\label{eq.circ}
\ad U_p \circ \pi_o   =   \pi_* \circ \bar{\jmath}_p \circ \e_o
\  , \qquad 
\forall o \in K \ , \ p : o \to a
\ ,
\end{equation}
relative to *-morphisms from $\cA_o$ into $B(H) = B(\cH_a)$.

When the poset $K$ is simply connected any net bundle over $K$ is trivial, i.e. it is isomorphic to the  constant net bundle.   
In this case the above construction can be recast in terms of morphisms from nets to $\rC^*$-algebras:  
there is a  canonical morphism 
$\e:(\cA,\jmath)_K\to\vec{A}$, where $\vec{A}$  is the  \emph{universal $\rC^*$-algebra} of the net
{\footnote{
When the poset is simply connected  the standard fibre of the enveloping net bundle 
is isomorphic to $\vec{A}$. This $\rC^*$-algebra has been introduced by Fredenhagen in \cite{Fre90}.}, 
satisfying the universal property  that  for any morphism  $\psi: (\cA,\jmath)_K\to B$  
there is a unique morphism $\psi^\uparrow:\vec{A}\to B$ 
such $\psi^\uparrow \circ \e= \psi$. Also the above functorial properties can be easily recast in these terms.

\section{$K$-homology and index.}
\label{B}

In the present section we prove our main results and study \emph{nets of Fredholm modules}
that yield the cycles for the $K$-homology of a net of $\rC^*$-algebras.
We give a complete classification of these objects in terms of the equivariant
$K$-homology of the holonomy $\rC^*$-dynamical system of the given net.

\subsection{Nets of Fredholm modules.}
\label{B1}

A net of $\rC^*$-algebras  $(\cA,\jmath)_K$ is said to be  \emph{graded} whenever there is a period 2 automorphism
$\beta \in {\bf aut}(\cA,\jmath)_K$, $\beta^2 = id_\cA$,
and \emph{ungraded} whenever $\beta = id_\cA$.
We shall denote graded nets by  $(\cA,\jmath;\beta)_K$. A \emph{graded} morphism $\phi:(\cA,\jmath;\beta)_K\to (\cA',\jmath';\beta')_K$ is a morphism
from $(\cA,\jmath)_K$ to  $(\cA',\jmath')_K$ intertwining the gradings, 
\begin{equation}
\label{grad.mor}
\phi_o\circ \beta_o \ = \ \beta'_o\circ \phi_o \ , \qquad o\in K \ . 
\end{equation}
The notion of graded net applies to other categories other than that of $\rC^*$-algebras. In particular, we see that a \emph{grading} for a  
Hilbert net bundle $(\cH,U)_K$ amounts to the existence of a family $\Gamma = \{ \Gamma_o \in U \cH_o\}$ of unitary operators satisfying 
$\Gamma_{o'} U_{o'o} = U_{o'o} \Gamma_o$ for any inclusion $o\leq o'$ and $\Gamma_o^2 = 1_o$ for any $o$. We observe that
the associated $\rC^*$-net bundle $(\cBH,\ad U)_K$  turns out to be graded as well by the the adjoint action  $\ad\Gamma$,
and that this grading splits  $(\cBH,\ad U)_K$  into Banach net bundles $(\cB^\pm \cH , \ad U)_K$ with fibres
\[
B^\pm(\cH_o) \ := \ \{ t\in B(\cH_o) \, | \,  \ad \Gamma_o(t) = \pm t\} \ \ , \ \ o\in K \ .
\]
On these grounds,  a \emph{graded representation}  $\pi$ of a graded net $(\cA,\jmath; \beta)_K$ into a graded 
Hilbert  net bundle $(\cH,U;\Gamma)$ is nothing but that  a graded morphism $\pi:(\cA,\jmath; \beta)_K\to  (\cBH,\ad U;\ad\Gamma)_K$.
We say that $\pi$ is \emph{ungraded} whenever both $(\cA,\jmath)_K$ and $(\cH,U)_K$ are ungraded.

\smallskip

We are now ready to introduce the main notion of this section.
\begin{definition}
\label{def.fred}
A \textbf{even/odd Fredholm module} over the graded/ungraded net $(\cA,\jmath; \beta)_K$ is a triple $(\pi,U,F)$, 
where $\pi : (\cA,\jmath; \beta)_K \to (\cBH,\ad U;\ad\Gamma)_K$ 
is an even/ungraded representation and $F = \{ F_o = F_o^* \in B(\cH_o) \}$ fulfils, in the even case, the relations
$\ad\Gamma_o(F_o) = - F_o$, $\forall o \in K$,
and, in all the cases,
\begin{equation}
\label{eq.khom}
U_{o'o} F_o = F_{o'} U_{o'o} \ \ , \ \
F_o^2-1_o \ \ , \ \ 
[F_o,\pi_o(t)] \, \in K(\cH_o)
\ \ , \ \qquad 
\forall o\leq o' \in K \ , \  t \in \cA_o \ . 
\end{equation}
We denote the set of even/odd Fredholm $(\cA,\jmath;\beta)_K$-modules by $\sF^{0/1}(\cA,\jmath)_K$.
\end{definition}
We note that the previous definition may be given also in the non-unital case, by applying the obvious modifications. 
Also note that in the even case $F$ is a section of $(\cB^- \cH , \ad U)_K$. \\
\indent Let us now pick $a \in K$. Using the previous definition we find
$\ad U_p(F_a) = F_o$ for any path $p : a \to o$ and $o\in K$,
and this implies
\begin{equation}
\label{eq.khom2}
F_a^2-1_a \ \ , \ \
\left[ F_a \ , \ \ad U_p \circ \pi_o(t) \right] \ \ , \ \
\ad U_q(F_a) - F_a 
\, \in K(\cH_a) 
\ \ , \ \qquad
\forall t\in\cA_o \ , \ q : a \to a \ .
\end{equation}
Notice that the last relation follows by the stronger equality
\begin{equation}
\label{eq.khom3}
\ad U_q(F_a) = F_a \ \ , \ \ \forall q : a \to a 
\end{equation}
(see also (\ref{eq.SEC})). 
Notice, furthermore, that in the previous equations only the evaluation of $F$ over $a\in K$ appears.
This suggests the following definition.
\begin{definition}
Let $(\cA,\jmath;\beta)_K$ be a graded/ungraded net of $\rC^*$-algebras. An even/odd \textbf{Fredholm $(\cA,\jmath;\beta)_K$-module
localized in $a \in K$} is given by a graded/ungraded representation $\pi$ on $(\cH,U;\Gamma)_K$ 
and an odd/ungraded operator 
$F_a \in B(\cH_a)$
fulfilling (\ref{eq.khom2}). We denote the set of even/odd Fredholm modules localized in $a \in K$ by 
$\sF^{0/1}_a(\cA,\jmath)_K$. 
\end{definition}
The basic idea of the previous definition is that in general we cannot coherently extend $F_a$ to a section of $(\cBH,\ad U)_K$;
in fact, if we try to define $F_o := \ad U_p(F_a)$ by means of the parallel displacement over $p : a \to o$,
then we have that $F_o$ is independent of $p$ only up to compact perturbations, unless $F_a$ is holonomy-invariant in the sense of (\ref{eq.khom3}). \\
\indent We observe that there is a one-to-one correspondence 
$\sF^*_a(\cA,\jmath)_K \leftrightarrow \sF^*_e(\cA,\jmath)_K$ for any $a,e\in K$ and $*=0,1$: 
in fact, since $K$ is pathwise connected there is a path $p : a \to e$, 
and defining $F_e := \ad U_p(F_a)$ it is easily seen that $F_e$ satisfies (\ref{eq.khom2}). 
We also point out that the evaluation over $a \in K$ yields a map
\[
\sF^*(\cA,\jmath)_K \to \sF^*_a(\cA,\jmath)_K
\ \ , \ \
*=0,1
\ ,
\]
that, as we shall see in Theorem \ref{thm.kg}, in general is not surjective.

\begin{remark}
In analogy to Baaj and Julg we define an \emph{unbounded Fredholm $(\cA,\jmath)_K$-module}
as a triple $(\pi,U,D)$, where $(\pi,U)$ is as in Def.\ref{def.fred} and $D := \{ D_o \}$ is a family
of unbounded, selfadjoint, regular
{\footnote{We recall that an operator $D : {\mathrm{Dom}}(D) \to H$ is said to be 
\emph{regular} whenever ${\mathrm{dim}} \ker D < \infty$ and ${\mathrm{Im}}(D)$ is closed.}}
operators such that
\[
\left\{
\begin{array}{ll}
\ad U_{o'o}(D_o) = D_{o'}  \ , \qquad  
  o \leq o' \in K \ ,
\\
(1+D_o^2)^{-1} \in K(\cH_o) \  , \qquad  
 o \in K \ ,
\\
{\mathrm{closure}} 
\{ t \in \cA_o : \ [D_o,\pi_o(t)] \in B(\cH_o) \} = 
\cA_o 
\ , \qquad    o \in K \ .
\end{array}
\right.
\]
As usual, $(\pi,U)$ is even/ungraded and $D$ is odd/ungraded in the even/odd case.
As in \cite[VIII.17.11]{Bla} we find that defining $F_o := D_o(1+D_o^2)^{-1}$ for any $ o \in K$
we obtain a Fredholm $(\cA,\jmath)_K$-module $(\pi,U,F)$.
Un unbounded, {\em localized} Fredholm $(\cA,\jmath)_K$-module is, by definition, a triple
$(\pi,U,D_a)$,
where $a \in K$ and $D_a$ is an odd, unbounded, regular and selfadjoint operator on $\cH_a$ 
such that 
$(\pi,U,D_a(1+D_a^2)^{-1} )$
is a (bounded) Fredholm module localized in $a$.
The fact that every bounded (localized)
Fredholm module arises from an unbounded (localized) Fredholm module shall be 
proved in Cor.\ref{cor.kg}.
\end{remark}

In the following result we show that, starting from the notion of holonomy dynamical system of a net, 
the $\pi_1^a(K)$-equivariant $K$-homology works well in describing nets of Fredholm modules, 
and that it can be completely interpreted in terms of these.
For the notions of equivariant (and strongly equivariant) $K$-homology see Appendix \ref{a.KK}.

\begin{theorem}
\label{thm.kg}
Let $K$ be a poset with fundamental group $\Pi$ and $(\cA,\jmath)_K$ a net of $\rC^*$-algebras
with holonomy dynamical system $(A,\Pi)$.
Then every localized Fredholm $(\cA,\jmath)_K$-module defines a cycle in $K^*_\Pi(A)$, 
with $*=0,1$ in the even and odd case respectively. 
Any element of $K^*_\Pi(A)$ arises in this way and, in particular, any element of the strongly equivariant 
$K$-homology $\widetilde K^*_\Pi(A)$ arises from a Fredholm $(\cA,\jmath)_K$-module.
\end{theorem}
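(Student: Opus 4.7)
The strategy is to exploit the bijective correspondence, established in Section \ref{A2}, between representations of $(\cA,\jmath)_K$ and covariant representations of the holonomy dynamical system $(A,\Pi)$. A localized Fredholm module already lives at a single fibre $\cH_a$, so the operator $F_a$ is naturally paired with the covariant representation obtained from $(\pi,U)$ via (\ref{rep:1}); conversely, any cycle defined on a single Hilbert space can be spread over $K$ using the inverse construction of Theorem \ref{thm.netdyn}(i).

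\textbf{Forward direction.} Starting from a localized Fredholm $(\cA,\jmath;\beta)_K$-module $(\pi,U,F_a)$, I would set $\pi_* := \pi_a$ and $U_{*,[p]} := U_p$ for loops $p:a\to a$, obtaining a covariant representation of $(A,\Pi)$ on $H := \cH_a$. The cycle conditions for $K^*_\Pi(A)$ then read off directly from (\ref{eq.khom2}): selfadjointness of $F_a$ and $F_a^2-1\in K(H)$ are immediate, the relation $\ad U_{*,[q]}(F_a) - F_a \in K(H)$ for every $[q]\in\Pi$ is the last item in (\ref{eq.khom2}), and the commutator $[F_a,\pi_*(b)] \in K(H)$ with $b \in A$ reduces, via (\ref{eq.circ}), to the middle condition of (\ref{eq.khom2}) on elements of the form $\ad U_p \circ \pi_o(t) = \pi_* \circ \bar\jmath_p \circ \e_o(t)$. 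A density argument, grounded in the universal property (\ref{eq.env.u}) which identifies a norm-dense $^*$-subalgebra of $A = \bar\cA_a$ generated by the union $\bigcup_{o,p}\bar\jmath_p\circ\e_o(\cA_o)$, then propagates the compact-commutator condition from this generating set to all of $A$. The grading (in the even case) is transported from $\Gamma_a$ without modification.

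\textbf{Reverse direction and strongly equivariant case.} Given a cycle $(\rho,V,F)$ in $K^*_\Pi(A)$, I would apply the inverse construction of Theorem \ref{thm.netdyn}(i) via (\ref{rep:2}) to obtain a representation $\rho_*$ of $(\bar\cA,\bar\jmath)_K$ on the Hilbert net bundle $(\cH_*,V_*)_K$; composing with the canonical morphism $\e$ of (\ref{eq.env}) yields a representation $\pi := \rho_*\circ\e$ of $(\cA,\jmath)_K$. Taking $F_a := F$, the three relations in (\ref{eq.khom2}) follow from the cycle properties of $F$ by reading (\ref{eq.circ}) in the opposite direction. When $(\rho,V,F)$ is strongly equivariant, i.e.\ $\ad V_g(F) = F$ holds exactly for every $g\in\Pi$, I would define $F_o := \ad V_p(F)$ for any path $p:a\to o$; independence of the choice of $p$ is guaranteed by invariance along loops, so that $F = \{F_o\}$ is a well-defined family satisfying the global relations (\ref{eq.khom}), producing a genuine (non-localized) Fredholm $(\cA,\jmath)_K$-module in the sense of Definition \ref{def.fred}.

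\textbf{Main obstacle.} I expect the hardest point to be the density argument in the forward direction: the compact-commutator condition is initially available only on elements of the form $\ad U_p \circ \pi_o(t)$ for $o\in K$, $p:o\to a$, $t\in\cA_o$, and must be extended to all of $A$. This rests on a precise description, via the universal property (\ref{eq.env.u}), of the parallel-transported images $\pi_*\circ\bar\jmath_p\circ\e_o(\cA_o)$ as a generating family of $A$, together with the standard fact that compact-valued derivations extend from a dense $^*$-subalgebra by continuity of the commutator map. Once this generation property is secured, both directions and the strongly equivariant refinement follow formally from Theorem \ref{thm.netdyn} and the identity (\ref{eq.circ}).
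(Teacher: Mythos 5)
Your proposal is correct and follows essentially the same route as the paper: both directions reduce to the bijection between representations of $(\cA,\jmath)_K$ and covariant representations of $(A,\Pi)$, with the compact-commutator condition transferred via (\ref{eq.circ}) and the fact that the images $\ad U_p\circ\pi_o(\cA_o)$ generate $\pi_*(A)$ (the paper cites \cite[Eq.4.10]{RV12} for exactly the generation property you flag as the main obstacle), and the strongly equivariant case is handled identically by spreading an invariant $\phi$ into a section $F_o:=\ad U_{p_{oa}}(\phi)$ along a path frame.
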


\begin{proof}
To fix the ideas we give the proof in the even case (the odd case is analogous).
By construction there is a one-to-one correspondence between representations of $(\cA,\jmath)_K$ and 
$\Pi$-equivariant representations of $A$ (see \S \ref{A2}).
By functoriality we also find that $A$ is graded by a period 2 automorphism 
$\beta_* \in \textbf{aut}A$.\smallskip 

\emph{Step 1.} 
We prove that any localized Fredholm $(\cA,\jmath;\beta)_K$-module $(\pi,U,F_a)$
defines a cycle in $K^0_\Pi(A)$.
To this end, we consider the $\Pi$-equivariant representation (\ref{rep:1}),
\[
\pi_* : A \to B(H)
\ \ , \ \
U_* : \Pi \to U(H)
\ \ , \ \
H := \cH_a
\ , 
\]
and the grading operator defined by $\Gamma_* := \Gamma_a \in U(H)$. Since  $\Gamma$ is a section of $(\cBH,\ad U)_K$, see   (\ref{eq.SEC}), 
we conclude that $\Gamma_*$ is $\Pi$-invariant.
This implies that $U_*$ commutes with $\Gamma_*$. 
By (\ref{grad.mor}) we find $\pi_* \circ \beta_* = \ad \Gamma_* \circ \pi_*$,  so $\pi_*$ is graded.  
We define $F_* := F_a \in B(H)$ and check that it fulfils (\ref{eq.KK.khom}) (with $A$ in place of $\sA$).
The fact that $F_*$ is $\Pi$-continuous is obvious, $\Pi$ being discrete.
The relations in (\ref{eq.KK.khom}) follow by (\ref{eq.khom2}),
using the facts that: \emph{(i)}  by definition, $K(H) = K(\cH_a)$; \emph{(ii)} the $\rC^*$-algebra generated by $\{ \ad U_p \circ \pi_o(\cA_o) \}_{p \, : o \to a}$ coincides, by \cite[Eq.4.10]{RV12}, with $\pi_*(A)$. We make the explicit computation for commutativity up to $K(H)$:
\[
[ F_* \, , \, \pi_*(t) ] \ \stackrel{(ii)}{=} \ 
\sum_k [ F_a \, , \, \ad U_{p_{ao_k}} \circ \pi_{o_k}(t_k) ] \ \stackrel{ (\ref{eq.khom2}) }{\in} \
K(H)
\ .
\]
In the same way the other properties in (\ref{eq.KK.khom}) are verified,
so $(\pi_*,F_*)$ is a cycle in $K^0_\Pi(A)$ as desired.\smallskip

\emph{Step 2.}  We prove that any cycle in $K^0_\Pi(A)$ is defined by a localized Fredholm $(\cA,\jmath)_K$-module.
To this end, let $(\eta,\phi)$ denote a Kasparov $A$-$\bC$-bimodule with underlying $\Pi$-action $V$.
Then by the considerations in \S \ref{A2} there is a representation $(\pi,U)$ of $(\cA,\jmath)_K$ 
such that 
\[
\eta = \pi_a \ \ , \ \ V = U_a \ .
\]
Defining $F_a := \phi$ we obtain the localized Fredholm $(\cA,\jmath;\beta)_K$-module 
$(\pi,U,F_a)$.
In particular we check explicitly commutativity up to $K(H)$:
\[
[ F_a  , \ad U_p \circ \pi_o(t) ]  \ \stackrel{ (\ref{eq.circ}) }{=} \
[ \phi , \eta( \bar{\jmath}_p \circ \e_o(t) )    ]  \ \stackrel{ (\ref{eq.KK.khom}) }{\in} \
K(H) = K(\cH_a)
\ \ , \ \
t \in \cA_o
\ ,
\]
having considered a path
$p : o \to a$.
In the same way, the other properties of (\ref{eq.khom2}) are verified.\smallskip 

\emph{Step 3.} We consider the particular case of Fredholm $(\cA,\jmath;\beta)_K$-modules 
vs. cycles in $\widetilde K^0_\Pi(A)$.
If $(\pi,U,F)$ is a Fredholm $(\cA,\jmath;\beta)_K$-module 
then the operator $F_* := F_a \in B(\cH_a)$ is $\Pi$-invariant by (\ref{eq.SEC}), i.e.
$\ad U_p(F_a) = F_a$, for any path $p : a \to a$,
and this is equivalent to saying that $(\pi_*,F_*)$, with $\Pi$-action $U_*$, 
defines a cycle in the strongly equivariant $K$-homology $\widetilde K^0_\Pi(A)$.
Conversely, if $(\eta,\phi)$ is a Kasparov $A$-$\bC$-module with $\Pi$-action $V$ 
\emph{such that $\phi$ is $\Pi$-invariant}, then using the representation $(\pi,U)$ of the previous step we define
\[
F_o  \ :=  \ \ad U_{p_{oa}}(\phi) \in B(H) \ \ , \ \ o \in K \ ,
\]
where $P_a = \{ p_{oa} \}$ is a path frame as usual.
On the light of the previous step, to prove that $(\pi,U,F)$ is a Fredholm $(\cA,\jmath)_K$-module 
it suffices check that $F = \{ F_o \}$ is a section; to this end we compute, for all $o \leq \tilde o$,
\[
\ad U_{\tilde oo}(F_o) \ = \
\ad U_{\tilde oo} \circ \ad U_{p_{oa}} (\phi)  \ = \
\ad U_{p_{\tilde oa}} \circ \ad U_g (\phi)  \ \stackrel{ (*) }{=} \
\ad U_{p_{\tilde oa}}(\phi)  \ = \ 
F_{\tilde o} \ ,
\]
where in (*) we used $\Pi$-invariance of $\phi$ for 
$g := [ p_{a \tilde o}*(\tilde oo)*p_{oa} ] \in \Pi$. 
The other properties of Fredholm module follow in the same way as in the beginning of the proof.
\end{proof}

A remark: Fredholm $(\cA,\jmath)_K$-modules 
$\omega = (\pi,U,F)$, $\omega = (\pi',U',F')$ 
are said to be unitarily equivalent whenever there is a unitary family 
$V = \{ V_o : H_o \to H'_o \}$
such that 
\[
\Gamma'_o V_o = V_o \Gamma_o
\ \ , \ \
\pi'_o = \ad V_o \circ \pi_o
\ \ , \ \
F'_o V_o = V_o F_o
\ \ , \ \
\forall o \in K
\ .
\]
Defining $V_* := V_a : H_a \to H'_a$ we conclude, by \cite[VIII.17.2]{Bla},
that $(\pi_*,F_*)$ is homotopic to $(\pi'_*,F'_*)$, thus they define the same class in $K^*_\Pi(A)$.
%
%

\begin{corollary}
\label{cor.kg}
For any localized Fredholm $(\cA,\jmath)_K$-module $(\pi,U,F_a)$ there is an unbounded, localized 
Fredholm $(\cA,\jmath)_K$-module defining the same class in $K^*_\Pi(A)$ as $(\pi,U,F_a)$.
\end{corollary}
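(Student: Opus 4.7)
The plan is to reduce, via Theorem~\ref{thm.kg}, to the Baaj--Julg bounded-transform theorem in its $\Pi$-equivariant form, and then to transport the resulting unbounded operator back to the localized setting. Given $(\pi,U,F_a) \in \sF^*_a(\cA,\jmath)_K$, Theorem~\ref{thm.kg} produces the cycle $(\pi_*,F_*)$ in $K^*_\Pi(A)$, where $F_* := F_a$ acts on $H := \cH_a$ with $\Pi$-action $U_*$. I would then invoke the equivariant Baaj--Julg theorem: since $\Pi$ is discrete, there exists an unbounded, self-adjoint, regular, odd operator $D_*$ on $H$ satisfying $(1+D_*^2)^{-1}\in K(H)$, with $[D_*,\pi_*(t)]$ bounded on a norm-dense $*$-subalgebra of $A$, and such that the class of its bounded transform coincides with $[(\pi_*,F_*)]$ in $K^*_\Pi(A)$.

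The next step is to set $D_a := D_*$ on $\cH_a = H$ and check that the triple $(\pi,U,D_a)$ is an unbounded localized Fredholm $(\cA,\jmath)_K$-module in the sense of the remark preceding the corollary. By that definition, this amounts to showing that the bounded transform $F'_a := D_a(1+D_a^2)^{-1}$ yields a bounded localized Fredholm module, which by Step~2 of the proof of Theorem~\ref{thm.kg} is equivalent to $(\pi_*,F'_*)$ being a cycle in $K^*_\Pi(A)$, with $F'_* := F'_a$. This follows directly from the properties of $D_*$: compactness of $(1+D_*^2)^{-1}$ together with the boundedness of $[D_*,\pi_*(t)]$ on a dense subalgebra gives $(F'_*)^2 - 1 \in K(H)$ and $[F'_*,\pi_*(t)] \in K(H)$ for every $t\in A$, while the approximate $\Pi$-equivariance supplied by Baaj--Julg transfers to $F'_*$ via continuous functional calculus applied to $D_*$.

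Having established this, the identity $[(\pi_*,F'_*)] = [(\pi_*,F_*)]$ in $K^*_\Pi(A)$ is built into the choice of $D_*$, and via the bijective correspondence of Theorem~\ref{thm.kg} this is exactly the statement that the bounded Fredholm module associated with $(\pi,U,D_a)$ and the original $(\pi,U,F_a)$ represent the same class. The main obstacle is the equivariant Baaj--Julg input itself, which is not part of the excerpt: one has to check that the classical construction -- producing $D_*$ from a self-adjoint lift and a suitable quasi-central approximate unit -- can be carried out so that the invariance of $\pi_*(A)$ under $U_*$ and the $K$-homology class are preserved. For discrete $\Pi$ this is essentially folklore (and falls within the scope of Kasparov's equivariant $KK$-theory); our situation is further simplified by the fact that we only need the approximate $\Pi$-equivariance intrinsic to a cycle, not genuine invariance of $D_*$, so no averaging argument on the unbounded operator itself is required.
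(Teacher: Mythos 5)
Your proposal follows essentially the same route as the paper's proof: pass to the Kasparov $A$-$\bC$-bimodule $(\pi_*,F_*)$ via Theorem \ref{thm.kg}, apply the Baaj--Julg bounded-transform theorem (the paper cites \cite[VIII.17.11]{Bla}) to obtain an unbounded cycle in the same class, and set $D_a := D_*$ to return to the localized picture. Your added discussion of the equivariant validity of Baaj--Julg for discrete $\Pi$ is a reasonable precaution that the paper glosses over, but it does not change the argument.
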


\begin{proof}
By a result of Baaj-Julg (see \cite[VIII.17.11]{Bla}), for any Kasparov $A$-$\bC$-bimodule $(\eta,\phi)$ there is an unbounded 
Kasparov $A$-$\bC$-bimodule $(\eta,D)$ such that $(\eta,D(1+D^2)^{-1})$ is homotopic to $(\eta,\phi)$.
By the previous theorem, $\eta$ yields a representation $(\eta_*,U)$ of $(\cA,\jmath)_K$, 
and defining $D_a := D$ for a fixed $a \in K$ yields the desired unbounded, localized Fredholm $(\cA,\jmath)_K$-module.
\end{proof}

\subsection{On the index of nets of Fredholm modules.}
\label{B2}

Let $(\cA,\jmath)_K$ be a net of $\rC^*$-algebras and $(A,\Pi)$ denote the associated holonomy dynamical system.
Then every (eventually unbounded, or localized) even/odd Fredholm $(\cA,\jmath)_K$-module 
$\omega := (\pi,U,F_a) \in \sF^*_a(\cA,\jmath)_K$, $*=0,1$,
lives on the Hilbert net bundle $(\cH,U)_K$ associated to the representation $(\pi,U)$ of $(\cA,\jmath)_K$,
and by the results of the previous section we obtain the cycle
$\omega_{hol} := (\pi_*,F_*)$ in $K^*_\Pi(A)$,
on which we can evaluate the index map
\[
{\mathrm{index}}^{hol} : K_*^\Pi(A) \times K^*_\Pi(A)  \to R(\Pi) \ \ , \ \ *=0,1 \ ,
\]
defined by the Kasparov product (here $R(\Pi) := KK^0_\Pi(\bC,\bC)$, see \S \ref{a.KK}).

\smallskip

We now assume that $K$ is a good base generating the topology of a locally compact Hausdorff space $X$.
As mentioned in \S \ref{A0}, this implies that there is an isomorphism $\Pi \simeq \pi_1(X)$.
By \cite[Theorem 4.2]{RV1}, $(\cA,\jmath)_K$ defines a canonical $C_0(X)$-algebra $\sA$ fulfilling the following property:
if we take in particular
$\omega = (\pi,U,F) \in \sF^*(\cA,\jmath)_K$, 
then by \cite[Theorem 5.3]{RV1} a cycle 
$\omega_{top}$
is defined in the representable $K$-homology $RK^*(\sA)$ in the sense of Appendix \ref{a.KK}.
The idea is that:
\begin{itemize}
\item $(\cH,U)_K$ defines a locally constant Hilbert bundle $\sH \to X$ with associated $C_0(X)$-module of sections $\hat \sH$, see Remark \ref{rem.b0};
\item $\pi$ defines a $C_0(X)$-module representation $\pi_{top} : \sA \to B(\hat \sH)$;
\item the family $F$ yields a Fredholm $C_0(X)$-module operator $F_{top} \in B(\hat \sH)$.
\end{itemize}
Thus, we have a map
\[
\sF^*(\cA,\jmath)_K \to RK^*(\sA)
\ \ , \ \ 
\omega \mapsto \omega_{top}
\ \ , \ \ 
*=0,1
\ ,
\]
and the index pairing for $\omega_{top}$ defined by the Kasparov product reads as

\[
{\mathrm{index}}^{top} : RK_*(\sA) \times RK^*(\sA)  \to RK^0(X) \ \ , \ \ *=0,1 \ .
\]
%
%
%
%
%
In particular, by Theorem \ref{thm.kg} we know that elements of $\sF^*(\cA,\jmath)_K$
define classes in the \emph{strongly} equivariant $K$-homology $\widetilde K^*_\Pi(A)$, and it is well-known
that in this case the index takes values in $\widetilde R(\Pi)$, the ``classical" representation ring 
generated by \emph{finite dimensional} unitary representations:
\begin{equation}
\label{eq.fdim}
\tau^{hol}(\cdot) : \widetilde K^*_\Pi(A)  \to \widetilde R(\Pi) 
\ \ , \ \  
\tau^{hol}(\pi_*,F_*) := {\mathrm{index}}(F_*) \ .
\end{equation}
In the previous formula ${\mathrm{index}}(F_*)$ is obtained by \emph{forgetting} the representation $\pi_*$
and is given by the Fredholm $\Pi$-index of $F_*$ (see \cite[\S 8]{Hig} or \cite[Ex.4.2.10]{Val}).
We note that $\widetilde R(\Pi)$ can be interpreted
as the Grothendieck ring generated by \emph{finite dimensional} locally constant Hilbert bundles on $X$,
see \S \ref{a.LC}.  \smallskip

Now,  there is a loss of information when passing from ${\mathrm{index}}^{hol}$ to ${\mathrm{index}}^{top}$, namely the extra-structure 
given by the holonomy: as observed in \S \ref{a.LC} the Chern character vanishes
on classes of $RK^0(X)$ arising from locally constant vector bundles, thus it is convenient to 
consider the Cheeger-Chern-Simon character $ccs$ (see (\ref{eq.ccs})). 
Composing $ccs$ with $\tau^{hol}$, and using Thm.\ref{thm.kg}, we get the (additive) characteristic class
\begin{equation}
\label{eq.ccs.net}
CCS : \sF^*(\cA,\jmath)_K \to  \bZ \oplus H^{odd}(X,{\mathbb{R/Q}})
\ \ , \ \
*=0,1
\ ,
\end{equation}
assigning to a (possibly unbounded) Fredholm $(\cA,\jmath)_K$-module $\omega = (\pi,U,F)$ the ccs-character of 
$\tau^{hol}(\omega_{hol}) \in \widetilde R(\Pi)$.
The map $CCS$ can be interpreted as an obstacle to $\tau^{hol}(\omega_{hol})$ inducing a 
trivial representation of $\Pi$, in fact in this case the Cheeger-Chern-Simons character 
reduces to an integer, the rank.

\subsection{Any Hilbert net bundle is an index}
\label{B3}

We now show that any Hilbert net bundle with finite rank 
(or, to be precise, its holonomy) can be interpreted as the index of a Fredholm module with coefficients in a net of $\rC^*$-algebras.
Of course, it is well-known that any element of the representation ring $R(\Pi)$ can be realized as the
index of a $K$-homology cycle with coefficients in some $\rC^*$-dynamical system, nevertheless
we feel that it is instructive to give a construction in terms of Hilbert net bundles.
\smallskip

We take a poset $K$ and define $\Pi := \pi_1^a(K)$. 
Given a separable Hilbert space $H_0$ and $d \in \bN$ we define 
$H := H_0 \otimes \bC^d$ and make the identification $H \simeq \oplus_{m \in \bN} \bC^d$.
Note that $\bU(d)$ acts on $H$ by means of the map $V \mapsto 1_0 \otimes V$
where $1_0 \in B(H_0)$ is the identity.
Consider the \emph{shift} operator $S^d \in B(H) $ defined as
\begin{equation}
\label{eq.shift0}
S^d(w) := 0 \oplus w_1 \oplus w_2 \oplus \cdots \ , \qquad  w = (w_m) \in  \oplus_{m \in \bN} \bC^d  \simeq H  \ , 
\end{equation}
and observe that, by construction,
\begin{equation}
\label{eq.shift}
[ S^d , 1_0 \otimes V ] = 0
\ , \qquad  V \in\bU(d)
\ .
\end{equation}
We now take a unital $\rC^*$-algebra $A \subset B(H)$ satisfying the following properties
\begin{equation}
\label{eq.A}
\left\{
\begin{array}{ll}
S^d t - t S^d \in K(H) \ ,
\\
(1_0 \otimes V) \cdot t \cdot (1_0 \otimes V^*) \in A \ ,  
\end{array}
\right.
\qquad  t \in A \ , \ V \in\bU(d)
\end{equation}
(for example, we may take $A = \{ \bC 1_0 + K(H_0) \} \otimes 1_d$, where $1_d \in \bM_d$ is the identity).
Note that the second relation in (\ref{eq.A}) says that $\bU(d)$ acts on $A$ by *-automorphisms.\smallskip 

As observed in Section \ref{A1}, to define an arbitrary net bundle is equivalent to giving an action of the homotopy group on the standard fibre of the net bundle. 
So let us start by considering  an arbitrary finite dimensional representation
$u : \Pi \to \bU(d)$ and the corresponding finite rank Hilbert net bundle  $(\cE,u_*)_K$  defined by
$\cE_o \equiv \bC^d$ for any $o\in K$ and   $u_{*,o'o} := u(g_{o'o})$ for any inclusion $o \leq o'$,  
where $g_{o'o} \in \Pi$ is defined by  (\ref{rep:2a}). The ampliation of the inclusions maps
$u_{*,o'o}$ to the Hilbert space $H=H_0 \otimes \bC^d$ i.e.
\[
U_{o'o} := 1_0\otimes u_{*,o'o} \ , \qquad  o \leq o' \ , 
\]
yields a  Hilbert net bundle $(\cH,U)_K$ where   $\cH_o \equiv H$ for any $o\in K$. 
%
%
Passing to  the adjoint action $\ad U$, the global invariance of the $\rC^*$-algebra $A$ 
under the action of  $\bU(d)$  (\ref{eq.A}) gives the $\rC^*$-net bundle  
\[
(\cA,\jmath)_K \ \ , \ \ \cA_o :=A \ \ , \ \ \jmath_{o'o} := \ad U_{o'o}|_A
\ \ , \ \ 
\forall o \leq o' \in K
\ ,
\]
and the corresponding \emph{defining representation}
\[
\pi : (\cA,\jmath)_K \to (\cBH,\ad U)_K  \ \ , \ \  \pi_o(t)=t  \ , \qquad  o\in K , \ t\in A \ , 
\]
where $\cBH_o \equiv \cB(H)$ for any $o\in K$. 
We now define $\hat H_0 := H_0 \oplus H_0$ and the Hilbert net bundle $(\hat \cH,\hat U)_K$, 
where $\hat \cH_o := \hat H_0 \otimes \cE_o $ for any $o\in K$ and 
\[
\hat U_{o'o} := id_{\hat H_0} \otimes  u_{*, o'o} \ , \qquad   o \leq o' \ ,
\]
endowed with the grading
\[
\Gamma_o ((v \oplus v') \otimes w) := (v \oplus (-v')) \otimes w  \ ,  \qquad v \oplus v' \in \hat H_0,  \ w \in \cE_o \ .
\]
We have isomorphisms 
$\hat \cH_o = \hat H_0 \otimes \bC^d \simeq H \oplus H = \cH_o \oplus \cH_o$ 
preserved by the inclusion maps $U$, $\hat U$, so there is a decomposition 
$(\hat \cH,\hat U)_K  \simeq  (\cH,U)_K  \oplus (\cH,U)_K$ 
and a graded representation $\hat \pi$ of $(\cA,\jmath)_K$ (endowed with the trivial grading),
\[
\hat \pi_o(t) := \pi_o(t)\oplus \pi_o(t)
\ ,\qquad  t \in \cA_o
\ , \ o \in K
\ .
\]
To add a Fredholm module structure we set
\[
F_o := 
\left( 
\begin{array}{cc}
0 & S^d \\
S^{d,*} & 0
\end{array}
\right)
\ \in \cB^- \hat \cH_o
\ , \qquad  o \in K
\ .
\]
Using (\ref{eq.shift}), for any inclusion $o\leq o'$ we find 
$\hat U_{o'o} F_o = F_{o'} \hat U_{o'o}$,
so $F := \{ F_o \}$ is an odd section of $(\cB \hat \cH,\ad \hat U)_K$.
Moreover 
$F_o = F_o^*$,   
$(1 - F_o^2) \hat \cH_o \simeq \cE_o$ and  
$[ F_o , \pi_o(t) ] \in K(\hat \cH_o)$, 
for any $o\in K$ (the last relation follows by the first of (\ref{eq.A})),
so Def.\ref{def.fred} is fulfilled and $\omega := (\hat \pi,\hat U,F) \in \sF^0(\cA,\jmath)_K$.

We now show that the index $\tau^{hol}(\omega) \in \widetilde R(\Pi)$ can be interpreted as the holonomy representation 
$u : \Pi \to \bU(d)$ of $(\cE, u_*)_K$ that was arbitrarily chosen at the beginning. 
To this end we note that by (\ref{eq.fdim}) we have
\[
\tau^{hol}(\omega) \ = \ 
{\mathrm{index}}(F_*) \ = \  
[\ker S^{d,*}] - [ \ker S^d ] \ = \ 
[\ker S^{d,*}] \ ,
\]
where $[M]$ denotes the equivalence class of the $\Pi$-vector space $M$ in $\widetilde R(\Pi)$
(for example, see \cite[Ex.4.2.10]{Val}).
Now, by (\ref{eq.shift0}) we have that $\ker S^{d,*}$ is the first direct summand of $H \simeq \oplus_m \bC^d$,
that is, $\ker S^{d,*} \simeq \bC^d$. The $\Pi$-action on $H$ is given by
$1_0 \otimes u(g)$, $\forall g \in \Pi$,
which becomes $\oplus_m u(g)$ in the decomposition $H \simeq \oplus_m \bC^d$; 
thus the $\Pi$-action on $\ker S^{d,*}$ is given by $u$ as desired.

\

An interesting class of examples is given by spaces $X$ homotopic to the circle $S^1$,
as $S^1$ itself (conformal theories), anti-de Sitter space-times and de Sitter space-time
with order 2 (general relativity). There we have, for any commutative ring $\cR$,
\[
\Pi \simeq \bZ
\ \ , \ \
\widetilde R(\Pi) \simeq \bZ[\bT] := \left\{ \sum_i k_i z_i \, : \, k_i \in \bZ \, , \, z_i \in \bT  \right\}
\ \ , \ \
H^{odd}(X,\cR) = H^1(X,\cR) \simeq \cR \ .
\]
If $(\cL,U)_K$ is a rank one Hilbert net bundle with holonomy
$u : \bZ \to \bT$,
then (\ref{eq.ccs}) takes the form
\[
ccs(u) \ = \ 1 + [c^\uparrow_1(u)] \ \in \bZ \oplus H^1(X,{\mathbb{R/Q}}) \ .
\]
Now, it is easily seen that for any 
$z \in H^1(X,{\mathbb{R/Z}}) \simeq {\mathbb{R/Z}}$ 
there is $u$ such that $c^\uparrow_1(u) = z$ 
(take $u(k) := \exp(2 \pi i k z)$, $k \in \bZ$).
This implies that any $z \in H^1(X,{\mathbb{R/Z}})$
is the first Chern class of some rank one Hilbert net bundle.
Since $\bZ[\bT]$ is generated as an additive group by classes of rank one Hilbert net bundles 
(labelled by $\bT$), we conclude that any cohomology class in (\ref{eq.ccs.net}) arises from some Fredholm $(\cA,\jmath)_K$-module.

\subsection{Applications to quantum field theory: sectors in curved space-times}
\label{B4}

As we mentioned in the introduction the interest in the Chern character (\ref{eq.ccs.net}) arises from its physical interpretation. 
In the following lines we first recall the approach by Longo (\cite[\S 6]{Lon01}) which, 
postulating the existence of a supersymmetry, showed how the statistical dimension of a superselection sector 
can be interpreted as an index. 
Then, generalizing the discussion of \cite[\S 6]{RV1} to non-abelian fundamental groups,
we present a new construction that does not assume existence of supersymmetries 
and can be applied to sectors with non-trivial holonomy in the sense of \cite{BR09,Vas14}.
The object that we obtain is a Fredholm module whose index takes values in $\widetilde R(\Pi)$ and, 
as a consequence of (\ref{eq.ccs.net}), in the odd cohomology of the space-time.

\paragraph{Field nets.}
We start with a field net $(\cF,\jmath)_K$ over a good base $K$ generating the topology of the space-time $X$ under consideration 
(\emph{doublecones} in the case of the Minkowski space-time, \emph{diamonds} in more generic cases, see \cite{GLRV01,Ruz05,BR09}), 
realized in a "defining" Hilbert space representation
\begin{equation}
\label{eq.B4.1}
\pi : (\cF,\jmath)_K \to B(H) \ ,
\end{equation}
that is, $\cF_o \subset B(H)$ for any $o \in K$ and $\jmath_{o'o} : \cF_o \to \cF_{o'}$, $o \subseteq o'$, is the inclusion map.
The Hilbert space $H$ is assumed to be separable.
We define the global field algebra 
$\vec{\cF}$ as the $\rC^*$-algebra generated by the local algebras  $\cF_o$ for any $o \in K$,
and assume that it is irreducible, $\vec{\cF}''=B(H)$, and that a strongly compact group $G \subset U(H)$ of internal symmetries acts, 
\emph{via} the adjoint action, by automorphisms of
$(\cF,\jmath)_K$,
\[
\alpha_g(F) \ := \ gFg^* \in \cF_o \ \ , \ \ o \in K \, , \, F \in \cF_o \, , \, g \in G \, ,
\]
in such a way that the observable net
$\cA_o := \cF_o \cap G'$, $o \in K$,
is defined, with global algebra $\vec{\cA}$,  i.e.\, the  $\rC^*$-algebra generated by the local algebras  $\cA_o$ for any $o \in K$.
We have the decomposition 
\[
H \ = \ \bigoplus_{\si \in \check{G}} H_\si \otimes L_\si \ ,
\]
where $\check{G}$ is the set of irreducible (finite-dimensional) representations $\si : G \to U(L_\si)$ and $H_\si$ is the multeplicity space.
In such a decomposition, the restriction $\pi_0 := \pi |_{\vec{\cA}}$ takes the form
\begin{equation}
\label{eq.B4.2}
\pi_0 : (\cA,\jmath)_K \to B(H)
\ \ , \ \
\pi_0(T) = \oplus_\si \{ \pi_\si(T) \otimes 1_\si \}
\ \ , \ \ 
T \in \vec{\cA}
\ ,
\end{equation}
where $1_\si \in B(L_\si)$ is the identity and $\pi_\si : (\cA,\jmath)_K \to B(H_\si)$ is an irreducible Hilbert space representation (that is, a \emph{sector}).
As shown by Doplicher, Haag and Roberts \cite[\S 1]{DHR}, \cite[\S 3.2]{GLRV01}, \cite[Eq.3.8]{BR09}, in the cases of interest all the Hilbert spaces $H_\si$, $\si \in \check{G}$, are isomorphic to $H_\iota$, 
where $\iota$ is the trivial representation of $G$; in particular 
$\pi_\si$ turns out to be unitarily equivalent to $\pi_\iota$ when restricted to $(\cA,\jmath)_S$ for some subset $S$ of $K$ 
contained in the causal complement of a given $e \in K$.
Thus, up to unitary equivalence, we may rewrite the representations $\pi_\si$ in (\ref{eq.B4.2}) as
\begin{equation}
\label{eq.B4.2a}
\pi'_\si : (\cA,\jmath)_K \to B(H_\iota) \ .
\end{equation}
It is well-known that the above assumptions are fulfilled in the case of the Minkowski space-time.
The fact that this scenario is reasonable also in generic curved space-times with at least two spatial dimensions has been discussed, for example, 
in \cite[\S 3]{GLRV01}, \cite[\S 3.2]{BR09}.
Reference representations of the type (\ref{eq.B4.1}) arise from Hadamard states which play the r\^ole of the usual vacuum state in the Minkowski space.

\paragraph{Supercharges and index.}
In \cite[\S 6]{Lon01} some assumptions are made to illustrate how a supersymmetric structure yields an interpretation of the \emph{statistical dimension}
$\dim L_\si$
as an index. The first is that the superselection sectors are implemented by \emph{localized endomorphisms}, that is
$\pi_\si$ is unitarily equivalent to $\pi_\iota \circ \check{\si}$ with $\check{\si} \in {\bf end}\vec{\cA}$.
Assuming that a field reconstruction as in \cite{DR90} can be made, 
there are mutually orthogonal partial isometries $\psi^\si_1 , \ldots , \psi^\si_{\dim L_\si} \in \vec{\cF}$ with total support the identity,
such that
\[
\ad \psi^\si(t) \, := \, \sum_k \psi^\si_k t \psi^{\si,*}_k \, = \, \check{\si}(t)
\ \ , \ \
\forall t \in \vec{\cA}
\ ;
\]
note that $\ad \psi^\si$ defines a *-endomorphism of $B(H)$, and the previous expression says that it restricts to a *-endomorphism of $\vec{\cA}$.
The vector space spanned by $\psi^\si_1 , \ldots , \psi^\si_{\dim L_\si}$ has scalar product $\psi,\psi' \mapsto \psi^*\psi \in \bC 1$, 
is stable under the adjoint $G$-action and is isomorphic to $L_\si$ as a $G$-Hilbert space.
The second assumption is that $\cF$ is supersymmetric: we have a Hamiltonian operator $\textbf{H}$ defined on $H$
admitting an odd square root $D$, the {\em supersymmetry}.
A simple argument based on the McKean-Singer formula shows that if there is a unique vector generating the kernel of $\textbf{H}$ then
${\mathrm{index}} \, (D^+) =$ $\dim \ker \textbf{H} = 1$,
where $D^+ : H_+ \to H_-$ is the even/odd component of $D$. 
Assuming that $\si$ is bosonic (i.e., any $\psi^\si_k$ commute with a given grading operator $\gamma = \gamma^{-1} \in G \cap G'$),
and considering the operator $\ad \psi^\si(D^+)$, the argument of \cite[\S 6]{Lon01} yields 
${\mathrm{index}}(\ad \psi^\si(D^+)) = \dim L_\si$.

\paragraph{The statistical dimension as an $\widetilde R(\Pi)$-valued index.}
We now give a construction of nets of Fredholm modules that yields the statistical dimension as an index without assuming existence of supercharges.
Our motivations are given by the following facts, making the construction of the previous paragraph problematic in cases of interest:
\begin{itemize}
\item First, in a generic space-time $X$ localized endomorphisms of the type $\check{\si}$ can be defined only on suitable *-subalgebras of $\vec{\cA}$,
      see \cite{GLRV01,BR09};
\item Secondly, supersymmetries turn out to be very singular objects and, at least in conformal theory, do not yield finite-dimensional Fredholm modules, 
      see \cite[\S 7]{Lon01}, \cite{CHKL10,BG07} and \cite{Con,Con88,JLO88}.
      Thus the interpretation of the statistical dimension as an index is problematic for finite-dimensional sectors in curved space-times
      with at least two spatial dimensions.
\end{itemize}
We proceed as follows. We consider a finite direct sum of mutually inequivalent, irreducible representations
\[
\varsigma \, := \, \si_1 \oplus \ldots \oplus \si_n \ \ , \ \ \si_k \in \check{G} \ , \ k=1,\ldots,n
\]
acting on the Hilbert space 
$L_\varsigma := \oplus_k L_{\si_k}$.
In correspondence with any $\si_k$ there is a projection
$e_k \in M_\varsigma := B(L_\varsigma) \cap \varsigma(G)'$;
any $e_k$ is central, that is, it commutes with elements of $M_\varsigma$, see \cite[\S 8.3]{KirG}.

\smallskip

Now, using the isomorphism 
$\oplus_k (H_\iota \otimes L_{\si_k}) \simeq H_\iota \otimes L_\varsigma$,
we may regard
$\oplus_k^n \pi'_{\si_k} \otimes 1_{\si_k}$
as the representation
\begin{equation}
\label{eq.B5}
\pi_\varsigma
\, : \, 
(\cA,\jmath)_K \to B(H_\iota \otimes L_\varsigma ) 
\ \ , \ \ 
\pi_\varsigma(T) := \sum_k^n \pi'_{\si_k}(T) \otimes e_k
\ \ , \ \ 
T \in \cA_o
\, , \,
o \in K
\ .
\end{equation}
By elementary calculations we have
\begin{equation}
\label{eq.ex1}
[ Z , e_k ] \, = \, [ Z , \varsigma(g) ] \, = \, 0
\ \ , \ \ 
[ 1_\iota \otimes Z , \pi_\varsigma(T) ] \, = \, [ 1_\iota \otimes Z , t \otimes 1_\varsigma ] \, = \, 0 \ ,
\end{equation}
for all $Z \in U(M_\varsigma)$, $o \in K$, $T \in \cA_o$, $t \in B(H_\iota)$, $g \in G$, $k=1,\ldots,n$.

\smallskip

We now consider a unitary representation of $\Pi := \pi_1(K) \simeq \pi_1(X)$ of the type
\[
\varrho : \Pi \to U(M_\varsigma) \subseteq U(L_\varsigma) \ .
\]
The idea is that $\varrho$ corresponds to the "localized holonomy" of the sector that we are going to construct, 
see \cite[Theorem 4.1 and Theorem 6.4$(i)$]{BR09},
whilst $\pi_\varsigma$ plays the r\^ole of the charge component in the sense of \cite[\S 6.1]{BR09}.
As a consequence of (\ref{eq.ex1}) we have
$1_\iota \otimes \varrho(\Pi) \subseteq \pi_\varsigma(\vec{\cA})'$,
in accord with \cite[Theorem 6.4$(i)$]{BR09}.
We define the Hilbert net bundle $(\cH,U)_K$,
\[
\cH_o := H_\iota \otimes L_\varsigma 
\ \ , \ \ 
U_{o'o} := 1_\iota \otimes \varrho(g_{o'o})
\ \ , \ \ 
\forall o \subseteq o' \in K
\ ,
\]
where $g_{o'o} \in \Pi$ is defined by (\ref{rep:2a}). Then we set
\[
\pi_\varrho : (\cA,\jmath)_K \to (\cBH,\ad U)_K
\ \ , \ \ 
\pi_{\varrho,o}(T) := \pi_\varsigma(T)
\ \ , \ \
\forall o \in K \, , \, T \in \cA_o
\ ;
\]
this is a well-defined representation as, by (\ref{eq.ex1}),
\[
[ 1_\iota \otimes \varrho(g_{o'o}) , \pi_\varsigma(T) ] \ = \ 0
\ \ \Rightarrow \ \
U_{o'o} \pi_{\varrho,o}(T) U_{o'o}^* = \pi_{\varrho,o}(T) = \pi_{\varrho,o'}(T)
\ ,
\]
for all $T \in \cA_o \subseteq \cA_{o'}$, $o \subseteq o' \in K$.
Note that $\pi_\varrho$ is a \emph{net representation} in the sense \cite[\S 2]{BR09}: using the terminology of the above-cited reference, 
we call \emph{the topological dimension} of $\pi_\varrho$ the dimension of the finite von Neumann algebra
$\varrho(\Pi)'' \subseteq M_\varsigma$,
whilst ${\mathrm{dim}}L_\varsigma$ plays the r\^ole of the statistical dimension of $\pi_\varrho$.

\smallskip

Let us now consider the reference state $w \in H_\iota$ defining the representation $\hat{\pi}_\iota$ \emph{via} the GNS-construction.
Then we may take $w$ as the first element of a base for $H_\iota$ and consider the corresponding shift operator
$S_w \in B(H_\iota)$
such that $S_w^*S_w = 1_\iota$ and $S_w S_w^*$ is the projection onto $H_\iota \ominus \bC w$.
In concrete cases such a decomposition arises when $(\cA,\jmath)_K$ is the net generated by a Wightman field $\phi$:
for any test function $f$ supported in $o \in K$, the quantum field $\phi(f)$ appears as an unbounded operator affiliated to $\cF_o$,
and $\phi(f) w \in H_\iota \ominus \bC w$ in the hypothesis that there are no spontaneously broken symmetries
(i.e., the one-point Wightman function of $\phi$ vanishes, see \cite[\S 5.1]{Ste}).

\smallskip

Now, we define the Hilbert net bundle
$(\hat{\cH},\hat{U})_K$,
where
$\hat{\cH}_o := \cH_o \oplus \cH_o$,
$\hat{U}_{o'o} := U_{o'o} \oplus U_{o'o}$, $o \subseteq o'$,
and the graded representation
$\hat{\pi}_\varrho : (\cA,\jmath)_K \to (\cB\hat{\cH},\ad\hat{U})_K$,
$\hat{\pi}_\varrho := \pi_\varrho \oplus \pi_\varrho$.
Then we set
\[
F_o \in \cB\hat{\cH}_o
\ \ , \ \ 
F_o \, := \, 
\left(
\begin{array}{cc}
0 & S_w^* \otimes 1_\varsigma  \\
S_w \otimes 1_\varsigma & 0
\end{array}
\right)
\ \ , \ \ 
\forall o \in K
\ .
\]
We have $F_o=F_o^*$, $F_o^2-1_o \in K(\hat{\cH}_o)$, $\forall o \in K$, and, using (\ref{eq.ex1}),
$\hat{U}_{o'o}F_o = F_{o'}\hat{U}_{o'o}$, $\forall o \subseteq o'$.
On the line of \cite[\S 5.1]{HR}, we define the "relative" \emph{dual net}
\[
\cA^{\varrho,w}_o \, := \, \{ T \in \cA_o \, : \, [\hat{\pi}_{\varrho,o}(T) , F_o] \in K(\hat{\cH}_o)  \} 
\ \ , \ \ 
o \in K
\ ,
\]
so that, regarding $\hat{\pi}_\varrho$ as a representation of $(\cA^{\varrho,w},\jmath)_K$, we have the 
even Fredholm $(\cA^{\varrho,w},\jmath)_K$-module $(\hat{\pi}_\varrho,\hat{U},F)$.
It turns out
\[
\tau^{hol}(\hat{\pi}_\varrho,\hat{U},F) \, = \, 
{\mathrm{index}}(F_*) \, = \, 
[ \ker (S_w^* \otimes 1_\varsigma) ] \, = \, 
[\varrho] \in \widetilde R(\Pi)
\ .
\]
Applying the Cheeger-Chern-Simons character, we obtain
\[
CCS(\hat{\pi}_\varrho,\hat{U},F) \ = \
ccs([\varrho]) \ = \ 
{\mathrm{dim}}L_\varsigma + 
\sum_{k=1}^{ {\mathrm{dim}}L_\varsigma } 
\frac{ (-1)^{k-1}  }{ (k-1)! } \ [ c^\uparrow_k(\varrho) ]
\, 
\in \bN \oplus H^{odd}(X,{\mathbb{R/Q}})
\ .
\]

\section{Nets of spectral triples.}
\label{C}

Nets of spectral triples have been introduced in the setting of conformal field theory 
to study structural properties of nets of *-algebras admitting a supercharge structure (\cite{CHKL10}).
Existence of supercharges has been proved for models in low dimensional quantum field theory and this, indeed, 
has been a motivation for studying entire cyclic cohomology and its variants (\cite[Chap. 4.9.$\beta$]{Con}, \cite{Con88,JLO88}).

\smallskip

In the present section we show that, when considering representations with non-trivial holonomy, the holonomy dynamical system of 
the given net of *-algebras is, instead of the universal algebra, a natural candidate for a description of 
the associated nets of spectral triples in terms of equivariant spectral triples.
The following definition is a natural generalization of \cite[Def.3.9]{CHKL10}.
\begin{definition}
Let $(\cA,\jmath)_K$  be a net of *-algebras.
An \textbf{even net of spectral triples over $(\cA,\jmath)_K$}, 
denoted by $(\cH,U,\Gamma,D,\pi)$, is given by: 
\begin{itemize}
\item[(i)]   A graded Hilbert net bundle $(\cH,U;\Gamma)_K$ carrying a graded 
             representation $\pi$ of $(\cA,\jmath)_K$;
\item[(ii)]  A family $D = \{ D_o : {\mathrm{Dom}}(D_o) \to \cH_o \}$ of odd, unbounded 
             selfadjoint operators such that 
             \[
             \ad U_{o'o}(D_o) = D_{o'} \ ,
              \qquad  o \leq o' \ ,
              \]
             fulfilling the property of $\theta$-summability,
             ${\mathrm{Tr}} (e^{-\beta D_o^2}) < \infty$,
             $\forall \beta > 0$,
             and defining the superderivations
             \[
             \left\{
             \begin{array}{ll}
             \delta_o (t) := D_o t - \ad \Gamma_o (t) D_o \ , 
             \\
             \forall o \in K \ , \ 
             t \in {\mathrm{Dom}}(\delta_o) := 
             \{ t \in \cB(\cH_o) \ | \ \exists b \in \cB(\cH_o) : \ad \Gamma_o (t) D_o \subset D_ot -b   \}
             \ ,
             \end{array}
             \right.
             \]
             such that
             $\pi_o(\cA_o) \subseteq {\mathrm{Dom}}(\delta_o)$, $\forall o \in K$.
\end{itemize}
\end{definition}

\begin{remark}
Let $t \in {\mathrm{Dom}}(\delta_o)$. Then 
$\delta_o(t) :=  D_o t - \Gamma_o t \Gamma_o D_o \in \cB(\cH_o)$
and, defining $t' := \ad U_{o'o}(t)$ we find
\[
\delta_{o'} (t') =
D_{o'} t' - \Gamma_{o'} t' \Gamma_{o'} D_{o'} =
\ad U_{o'o} ( D_o t - \Gamma_o t \Gamma_o D_o ) =
\ad U_{o'o} \circ \delta_o(t)
\ .
\]
This implies that $t' \in {\mathrm{Dom}}(\delta_{o'})$, so the family
$\{ {\mathrm{Dom}}(\delta_o) \}$ is stable under the inclusion map $\ad U$.
\end{remark}

We add further structure defining arrows. A unitary map between nets of spectral triples
$(\cH,U,\Gamma,D,\pi)$, $(\cH',U',\Gamma',D',\pi')$ is a \emph{unitary} morphism
\[
V : (\cH,U) \to (\cH',U')
\ \ , \ \
V_{o'} \circ U_{o'o} = U'_{o'o} \circ V_o
\ \ , \ \
o \leq o'
\ ,
\]
such that, for each $o \in K$,
\[
V_o \circ \Gamma_o = \Gamma'_o \circ V_o
\ \ , \ \ 
\ad V_o \circ \pi_o  = \pi'_o
\ \ , \ \
\ad V_o \circ D_o = D'_o
\ .
\]
In this case we write
\[
V : (\cH,U,\Gamma,D,\pi) \to (\cH',U',\Gamma',D',\pi') \ .
\]
We denote the category of nets of spectral triples over $(\cA,\jmath)_K$ by
${\mathrm{ncg}}(\cA,\jmath)_K$.

\begin{definition}
Let $\Pi$ be a group acting on the *-algebra $A$. 
A \textbf{$\Pi$-equivariant (even) spectral triple over $(A,\Pi)$}, denoted by $(H,\Gamma,D,\pi,u)$, 
is given by: 
(i)          A graded Hilbert space $(H;\Gamma)$ with a graded, unitary representation
             $u : \Pi \to U(H)$, $u_g \Gamma = \Gamma u_g$, $\forall g \in \Pi$;
(ii)         A graded, $\Pi$-equivariant representation
             \[
             \pi : A \to \cB(H)
             \ \ : \ \ 
             \pi(gt) = u_g \pi(t) u_g^*
             \ , \
             t \in A
             \ , \
             g \in \Pi
             \ ;
             \]
(iii)        An odd, unbounded, selfadjoint $\Pi$-invariant operator 
             \[
             D  : {\mathrm{Dom}}(D) \to H
             \ \ : \ \
             u_gD = Du_g
             \ , \
              g \in \Pi
             \ ,
             \]
             fulfilling the property of $\theta$-summability,
             ${\mathrm{Tr}} (e^{-\beta D^2}) < \infty$,
             $\forall \beta > 0$,
             and defining the superderivation
             $\delta := D \cdot - \ad \Gamma(\cdot) D$
             such that
             $\pi(A) \subseteq {\mathrm{Dom}}(\delta)$.
\end{definition}

Let $S := (H,\Gamma,D,\pi,u)$ and $S' := (H',\Gamma',D',\pi',u')$ be $\Pi$-equivariant
spectral triples. An arrow from $S$ to $S'$ is a $\Pi$-equivariant unitary 
\[
V : H \to H'
\ \ , \ \
V \circ u_g = u'_g \circ V
\ \ , \ \ g \in \Pi
\ ,
\]
such that
$V \circ \Gamma = \Gamma' \circ V$,
$\ad V \circ \pi  = \pi'$,
$\ad V \circ D = D'$.
We denote the category of $\Pi$-equivariant spectral triples over $A$ by
${\mathrm{ncg}}_\Pi(A)$.\smallskip

Let $(\cA,\jmath)_K$ be a net of *-algebras. 
The construction of the enveloping net bundle applies without substantial modifications in this case,
so since in the sequel we shall make explicit computations we recall the construction of its standard fibre,
which is a *-algebra  carrying a $\pi_1^a(K)$-action by *-automorphisms.
We pick $a \in K$ and define $A$ as the *-algebra of pairs 
$(p,t)$, $p : o \to a$, $\forall o \in K$, $t \in \cA_o$,
with relations 
\begin{equation}
\label{def.spec3.1}
\left\{
\begin{array}{ll}
(p,t) + \lambda (p,t') = (p,t+\lambda t') \ \ , \ \ 
(p,t) \cdot (p,t') = (p,tt') \ \ , \ \
(p,t)^* = (p,t^*)
\\
(p*(\tilde oo),t) = (p,\jmath_{\tilde oo}(t))
\ \ , \ \
(p,t) = (p',t) \ \ {\mathrm{if}} \ \ p \sim p'
\end{array}
\right.
\end{equation}
(the symbol $\sim$ stands for homotopy equivalence of paths in the sense of \S \ref{A0}).
Defining $\Pi := \pi_1^a(K)$ yields the $\Pi$-action
\begin{equation}
\label{eq.spec3.1}
\Pi \to \textbf{aut} A
\ \ , \ \
g(p,t) := (g*p,t)  \ , \qquad  g : a \to a \ ,
\end{equation}
and reasoning as in \S \ref{A2} we conclude that 
$\Pi$-equivariant representations $(\pi,u)$ of $A$ are in
one-to-one correspondence with representations of $(\cA,\jmath)_K$.
By construction, $A$ is the fibre of the enveloping net bundle of  $(\cA,\jmath)_K$.

\begin{theorem}
\label{thm.spec3}
Let $K$ be a poset with fundamental group $\Pi$ and $(\cA,\jmath)_K$ a net of *-algebras. 
Then there is an equivalence
${\mathrm{ncg}}(\cA,\jmath)_K \leftrightarrow {\mathrm{ncg}}_\Pi(A)$.
\end{theorem}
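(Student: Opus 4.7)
The plan is to set up a pair of mutually quasi-inverse functors $F:{\mathrm{ncg}}(\cA,\jmath)_K \to {\mathrm{ncg}}_\Pi(A)$ and $G:{\mathrm{ncg}}_\Pi(A) \to {\mathrm{ncg}}(\cA,\jmath)_K$, following the template of the proofs of Theorem \ref{thm.netdyn} and Theorem \ref{thm.kg}. The bosonic data (the graded Hilbert net bundle together with its graded representation) is handled by the bijective correspondence of \S \ref{A2} between graded representations of $(\cA,\jmath)_K$ and graded $\Pi$-equivariant representations of $A$; the new step is to transport the unbounded selfadjoint operator $D$ and to verify that $\theta$-summability and the superderivation condition are preserved under this transport.

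The forward functor $F$ evaluates at the base point $a$: given $(\cH,U,\Gamma,D,\pi)$ one sets $H:=\cH_a$, $\Gamma_*:=\Gamma_a$, $D_*:=D_a$, $\pi_*:=\pi_a$, and takes $u:=U_*$ as in (\ref{rep:1}). Since $\Gamma$ is a section of $(\cBH,\ad U)_K$, (\ref{eq.SEC}) yields the $\Pi$-invariance of $\Gamma_a$, hence commutation with $u$; the compatibility $\ad U_{o'o}(D_o)=D_{o'}$, composed along any loop $p:a\to a$, gives $\ad U_p(D_a)=D_a$, so $D_*$ is $\Pi$-invariant. The $\theta$-summability condition and the inclusion $\pi_*(A)\subseteq{\mathrm{Dom}}(\delta_*)$ are fibrewise statements that read off directly from the corresponding conditions at $D_a$ and $\pi_a$.

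For the backward functor $G$, fix a path-frame $P_a=\{p_{oa}\}$ and, given $(H,\Gamma,D,\pi,u)$, build the Hilbert net bundle $(\cH,U)_K$ as in (\ref{rep:2a})--(\ref{rep:2}) with $\cH_o:=H$ and $U_{o'o}:=u_{g_{o'o}}$. Take $\Gamma_o:=\Gamma$ and $D_o:=D$ independent of $o$, and define $\pi_o$ by composing the canonical morphism $\e_o:\cA_o\to\bar\cA_o$ with the net-bundle representation induced by $\pi$, as prescribed in \S \ref{A2}. The equivariance relations $u_g\Gamma=\Gamma u_g$ and $u_g D=D u_g$ translate, respectively, into $\Gamma$ being a section of $(\cBH,\ad U)_K$ and into $\ad U_{o'o}(D_o)=D_{o'}$; the $\theta$-summability of $D_o$ and the inclusion $\pi_o(\cA_o)\subseteq{\mathrm{Dom}}(\delta_o)$ at any $o$ coincide with the corresponding conditions at $a$ under the identification $\cH_o=H$, so the net of spectral triples is well defined.

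For morphisms, $F$ sends $V=\{V_o\}$ to $V_a$ and $G$ extends a $\Pi$-equivariant unitary $V_*$ to the constant family $V_o:=V_*$; these assignments are mutually inverse, and the natural isomorphisms $F\circ G\cong{\mathrm{id}}$, $G\circ F\cong{\mathrm{id}}$ reduce to the corresponding statements for Hilbert net bundles proved in \S \ref{A2}, the extra data $\Gamma$ and $D$ being preserved by construction. The only step that in principle calls for care is the handling of the unbounded operator: one has to ensure that ${\mathrm{Dom}}(D)$, the trace-class property of $e^{-\beta D^2}$, and the superderivation condition transfer between fibres under the inclusion unitaries $U_{o'o}$. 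The $\Pi$-invariance of $D$ trivialises these concerns, since $D_o=D$ across all fibres and the inclusions act by unitary conjugation fixing $D$; no spectral or analytic subtleties arise, and the whole argument reduces to threading the bosonic correspondence of \S \ref{A2} through the additional graded and unbounded structure.
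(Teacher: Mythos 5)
Your proposal is correct and follows essentially the same route as the paper: both proofs thread the \S \ref{A2} correspondence between representations of $(\cA,\jmath)_K$ and $\Pi$-covariant representations of $A$ through the extra data $\Gamma$ and $D$, using $\Pi$-invariance (equivalently, the section property) to transport the unbounded operator, its domain, and $\theta$-summability between fibres. The only point you state a bit too quickly is the inclusion $\pi_*(A)\subseteq\mathrm{Dom}(\delta_*)$: since $A$ is generated by pairs $(p,t)$ with $t\in\cA_o$ for arbitrary $o$, this is not purely a condition "at $a$" but requires the intertwining identity $\delta(\ad U_p\circ\pi_o(t))=\ad U_p\circ\delta_o(\pi_o(t))$, which the paper verifies explicitly and which follows from the stability of $\{\mathrm{Dom}(\delta_o)\}$ under $\ad U$ noted in the remark after the definition.
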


\begin{proof}
\emph{Step 1}. We give an explicit description of the equivalence associating unitary representations of the fundamental group to Hilbert net bundles. 
Given a pair $(H,u)$ where $u:G\to U(H)$ is a unitary representation on the Hilbert space $H$, following  Section \ref{A1} the corresponding Hilbert net bundle $(\cH,U)_K$ is defined as follows. 
Let us pick $a\in K$, so  $G=\pi^a_1(K)$. Take a path-frame $P_a$ over $a$ and define 
$\cH_o\equiv H$ for any $o\in K$, as usual, whilst  the inclusion maps $u_*$ are defined as 
\[
U_{o'o}:= u_{[p_{ao'}*(o';o'o)*p_{oa}]} \ , \qquad  o\leq o'  \ , \qquad \qquad (*)
\] 
where the the square bracket denotes the homotopy equivalence class of of the loop.  
Conversely, if  $(\cH,U)_K$ is a Hilbert net bundle then we set  $H:=\cH_a$. 
Moreover, we extend the inclusion maps to 1-simplices  $b  \in \Si_1(K)$ by setting 
$U_b:=  U_{\partial_0b,|b|}\cdot U_{|b|,\partial_1b}$,
and then to paths $p=b_n*\cdots *b_2*b_1$ by setting 
$U_p:=U_{b_n}\cdots U_{b_2}\cdot U_{b_1}$. 
Finally, since any element of $\Pi$ is of the form $[p]$ for some loop $p$ over $a$ 
we get the representation
\[
U_* : \Pi \to U(H)
\ \ , \ \
U_{*,g} := U_p\ , \qquad  g=[p] \in \Pi \ . \qquad \qquad (**)
\]
For details we address the reader to \cite{RV12}, where it is proved that 
the correspondence  
$(H,u) \mapsto (\cH,U)_K$
yields the desired equivalence between 
the category of unitary representations of $\Pi$ and the one of Hilbert net bundles on $K$.
To be concise, in the following steps we proceed \emph{as if we had an isomorphism} of categories,
bypassing all the computations needed to pass from $(\cH,U)_K$ to the 
Hilbert net bundle -- isomorphic to $(\cH,U)_K$ -- \ defined by applying (*) to $(H,U_*)$.

\emph{Step 2}.  $\Pi$-invariant (not necessarily bounded) operators of $H$ 
are in one-to-one correspondence with sections of the net bundle $(\cBH,\ad U)_K$ defined
by $(\cH,U)_K$.
For, take a $\Pi$-invariant operator $T : {\mathrm{Dom}}(T) \to H$, i.e.\
$u_g T u_g^* = T$, $\forall g \in \Pi$;
note that this implies that $u_g ( {\mathrm{Dom}}(T) ) = {\mathrm{Dom}}(T)$, $g \in \Pi$. 
We define the family of operators
\[
{\mathrm{Dom}}(T_o) := {\mathrm{Dom}}(T)
\ \ , \ \
T_o := T
\ \ , \ \
o \in K
\ ,
\]
and, by definition $(*)$ and $\Pi$-invariance,  find  that $U_{o'o}({\mathrm{Dom}}(T_o)) =\mathrm{Dom}(T_{o'})$  and    
$\ad U_{o'o} \circ T_o =T_{o'}$ for any inclusion $o\leq o'$. In particular, if $\Gamma : H \to H$ is bounded and $\Pi$-invariant  we have that 
$\ad U_{o'o} \circ \Gamma_o = \Gamma_{o'}$ for any inclusion $o \leq o'$ where
$\Gamma_o := \Gamma$ for any $o\in K$. 
Conversely, if $\{ T_o : {\mathrm{Dom}}(T_o) \to \cH_o \}$ is a family of operators
such that
\[
U_{o'o} {\mathrm{Dom}}(T_o) = {\mathrm{Dom}}(T_{o'})
\ \ , \ \
\ad U_{o'o} \circ T_o = T_{o'}
\ \ , \ \
o \leq o'
\ ,
\]
then defining
${\mathrm{Dom}}(t) := {\mathrm{Dom}}(T_a)$,
$t := T_a$
and  using the relation $(**)$ we get a $\Pi$-invariant operator, since
$u_g( {\mathrm{Dom}}(t) ) = \mathrm{Dom}(t)$ and
$\ad u_g \circ t = t$ for any $g\in \Pi$. \smallskip

\emph{Step 3}. It is clear that 
${\mathrm{Tr}} (e^{-\beta D^2}) =
 {\mathrm{Tr}} ( W e^{-\beta D^2} W^* ) =
 {\mathrm{Tr}} (e^{-\beta WD^2W^*})$
is invariant under the action of a unitary operator $W \in B(H)$, so $D$ is 
$\theta$-summable on $H$ if and only if the associated family $\{ D_o \}$ has 
$\theta$-summable elements on the Hilbert spaces $\cH_o$, $o \in K$. \smallskip

\emph{Step 4}. Let $\Gamma$ and $D$ as in the definition of $\Pi$-equivariant spectral triple.
Then the algebraic relations between $\Gamma$, $D$ are preserved when defining
the families $\{ \Gamma_o \}$, $\{ D_o \}$, thus 
\[
\Gamma^* = \Gamma = \Gamma^{-1}
\ \ , \ \
\ad \Gamma \circ \pi(A) = \pi(A)
\ \ , \ \
\Gamma D = - D \Gamma
\ \ , \ \
\Gamma u_g = u_g \Gamma
\]
if, and only if, 
\[
\Gamma_o^* = \Gamma_o = \Gamma_o^{-1}
\ , \
\ad \Gamma_o \circ \pi_o(\cA_o) = \pi_o(\cA_o)
\ , \
\Gamma_o D_o = - D_o \Gamma_o
\ , \ 
\Gamma_{o'} U_{o'o} = U_{o'o} \Gamma_o
\ .
\]

\emph{Step 5}. Finally we consider the domains of the involved superderivations.
Let $\pi : A \to B(H)$ be the representation associated to the given 
$\Pi$-equivariant spectral triple. Then for any $t \in \cA_o$, $o \in K$, 
by definition of $\pi_*$ we find
\[
\delta_o \circ \pi_{*,o}(t) \ = \ \delta \circ \pi(p_{oa},t) \in B(H)
\]
and $\pi_{*,o}(\cA_o) \subseteq {\mathrm{Dom}}(\delta_o)$.
Conversely, if $\eta : (\cA,\jmath)_K \to (\cBH, \ad U)_K$ is the graded
representation associated to the given net of spectral triples, then for each
$(p,t) \in A$ (i.e. $p : o \to a$ and $t \in \cA_o$) we find
\[
\begin{array}{ll}
\delta \circ \eta_*(p,t) & :=
D_a \cdot \ad U_p \circ \eta_o (t) - \ad \Gamma_a \circ \ad U_p \circ \eta_o(t) \cdot D_a = \\ & =
D_a \cdot \ad U_p \circ \eta_o (t) - \ad U_p \circ \ad \Gamma_o \circ \eta_o(t) \cdot D_a = \\ & =
\ad U_p ( D_o \cdot \eta_o(t) - \ad \Gamma_o \circ \eta_o(t) \cdot D_o ) = \\ & =
\ad U_p \circ \delta_o (t)
\ .
\end{array}
\]
Since this last operator belongs to $\ad U_p(\cB(\cH_o)) = \cB(\cH_a)$, we conclude that
$\delta \circ \eta_*(p,t) \in \cB(\cH_a)$, i.e. $\eta_*(p,t) \in {\mathrm{Dom}}(\delta)$.
\end{proof}

\

We conclude with a remark on the index of nets of spectral triples.
In conformal field theory nets of spectral triples $\omega := (\cH,U,\Gamma,D,\pi)$ arise from positive 
energy representations of Virasoro algebras (\cite{CHKL10}), and (again) $D$ is an odd square root of the Hamiltonian.
If we apply Theorem \ref{thm.spec3} {\em forgetting} the $\Pi$-action, then assigning the index we get a 
class $\omega_{ncg} \in HC_\eps(A)$, where $HC_\eps(A)$ is the entire cyclic cohomology (see \cite{Con88,JLO88}). 
This yields the map
\[
{\mathrm{index}}^{ncg} : K_0(A) \times HC_\eps(A)  \to \bC \ .
\]
The above map does not take the $\Pi$-action into account
thus, when the holonomy is not trivial, a more suitable invariant should be given by the
equivariant entire cyclic cohomology introduced in \cite{KL91,KKL91} which, nevertheless, 
at the authors knowledge is defined only for $\Pi$ finite.
In this case ${\mathrm{index}}^{ncg}$ takes values in the ring of central functions of $\Pi$ 
(that is, the centre of the convolution algebra $L^1(\Pi,\bC)$).

\appendix

\section{Locally constant bundles and secondary characteristic classes.}
\label{a.LC}

A bundle $\sB \to X$ is said to be \emph{locally constant} whenever it has an atlas such that 
the induced transition maps are locally constant. 
A bundle morphism is said to be locally constant whenever it appears, on each local chart,
as a locally constant map with values in the space of morphisms from a standard fibre to the other.
In general a locally trivial bundle is not locally constant and the same is true for a bundle morphism,
so locally constant bundles define a non-full subcategory of the one of locally trivial bundles on $X$.

The category of locally constant bundles can be equivalently described in terms of the holonomy representation,
as follows.
As a first step  we note that the universal cover $q : \tilde X \to X$ defines a right principal $\pi_1(X)$-bundle, so each fibre $q^{-1}(x)$, $x \in X$, is a right $\pi_1(X)$-space.
If $B$ is a left $\pi_1(X)$-space then it is easily verified that the space
\[
\sB := \tilde X \times_{\pi_1(X)} B 
\ := \
\{ \ [y,v] := \{ (yp,pv) \in \tilde X \times B \}_{p \in \pi_1(X)} \ \}
\]
is a locally constant bundle when endowed with the obvious projection 
$\sB \to X$.
A locally constant morphism is then defined by means of an intertwiner of holonomy representations.
We shall use the notion of locally constant bundle in the case in which the fibres are $\rC^*$-algebras
and Hilbert spaces, often finite-dimensional; for details in this last case (\emph{flat} bundles) 
see \cite[\S I.2]{Kob}.

\begin{remark}
\label{rem.b0}
Using the isomorphism $\pi_1^a(K) \simeq \pi_1(X)$ and Theorem \ref{thm.netdyn} we get the 
following functor from the category of Hilbert net bundles to the one of locally constant Hilbert bundles:
\[
(\cH,U)_K \ \mapsto \ \sH := \tilde X \times_{\pi_1(X)} \cH_*  \ ,
\]
where $\cH_* := \cH_a$ is a left $\pi_1(X)$-space by means of the holonomy $U_* : \pi_1^a(K) \to U \cH_*$.
This is indeed an equivalence, which also holds for $\rC^*$-net bundles.
\end{remark}

Although ordinary characteristic classes vanish on locally constant bundles when the base space is a manifold
(\cite[\S II.3]{Kob}), there is anyway a good substitute, the {\em Cheeger-Chern-Simons character}
\[
ccs : \widetilde R(\pi_1(X)) \to \bZ \oplus H^{odd}(X,{\mathbb{R/Q}})
\]
(see \cite[Theorem 8.22]{CS85}); here, for any commutative ring $\cR$,
\[
H^{odd}(X,\cR) \ := \ \oplus_k H^{2k+1}(X,\cR)
\]
is the odd singular cohomology 
and $\widetilde R(\Pi)$, for any group $\Pi$, is the Grothendieck ring generated by 
classes of finite-dimensional unitary representations.
$ccs$ is constructed in the following way: for \emph{any} rank $d$ vector bundle $\sE \to X$,
the Chern classes are given by closed forms
\[
c_k(\sE) \in Z^{2k}_{deRham}(X,\bR)
\ \ , \ \
k = 1 , \ldots , d
\ ,
\]
and there are cochains $c^\uparrow_k(\sE) \in C^{2k-1}(X,{\mathbb{R/Z}})$
such that
\begin{equation}
\label{eq.ccs0}
\left \langle c^\uparrow_k(\sE) , \partial \ell \right \rangle  
\ = \ 
\int_\ell c_k(\sE) \ {\mathrm{mod}} \bZ 
\ \ , \ \
\forall \ell \in C_{2k}(X)
\ ,
\end{equation}
where $C_{2k}(X)$ is the set of singular $2k$-chains and
$\partial : C_{2k}(X) \to Z_{2k-1}(X)$
is the boundary. Now, any $u \in \widetilde R(\pi_1(X))$ defines the locally constant
vector bundle 
$\sE := \wa X \times_{\pi_1(X)} \bC^d$ 
(see Rem.\ref{rem.b0}),
whose Chern classes are zero by \cite[Prop.3.1]{Kob}. By (\ref{eq.ccs0}) we have that
$c^\uparrow_k(\sE)$ vanishes on $\partial C_{2k}(X)$, so we have cocycles
\[
c^\uparrow_k(u) := c^\uparrow_k(\sE) \in Z^{2k-1}(X,{\mathbb{R/Z}}) 
\ \ , \ \
\forall k = 1 , \ldots , d \ ,
\]
and the desired character is defined by
\begin{equation}
\label{eq.ccs}
ccs (u) := d + \sum_{k=1}^d \frac{ (-1)^{k-1}  }{ (k-1)! } \ [ c^\uparrow_k(u) ]
\ \in \bZ \oplus H^{odd}(X,{\mathbb{R/Q}})
\ ,
\end{equation}
where $[ \cdot ]$ denotes the cohomology class mod ${\mathbb{Q}}$.
$ccs$ reduces to the rank on trivial representations of $\pi_1(X)$ (i.e., trivial locally constant bundles) 
and is additive under direct sums. 
Instead, for tensor products we have 
\[
ccs( u \otimes u') = d \, ccs(u') + d' \, ccs(u) - dd' \ .
\]

\section{Basics of (representable) $KK$-theory.}
\label{a.KK}

For reader's convenience, in this section we recall some notions of $KK$-theory 
(see \cite{Bla} and \cite[\S 2]{Kas88} for details). To be concise, we will call $\Pi$-$\rC^*$-algebra 
a $\rC^*$-dynamical system $(A,\Pi,\alpha)$ and write $\alpha_g(t) = gt$, $g \in \Pi$, $t \in A$.
In the same way, a $\Pi$-morphism is a $\Pi$-equivariant morphism between $\Pi$-$\rC^*$-algebras. 
A grading on $A$ is given by an automorphism $\gamma_A$ of $A$ with period $2$
commuting with the $\Pi$-action; a $\Pi$-morphism is said to be graded whenever 
it is a $\bZ_2$-morphism.

Given the graded $\Pi$-$\rC^*$-algebra $B$, a $\Pi$-Hilbert $B$-module $H$ is said
to be graded whenever, for all $g \in \Pi$, $v,w \in H$, $b \in B$, 
it turns out that:
(i) there is a linear map $\Gamma : H \to H$ such that
\[
\Gamma (vb) = \Gamma v \ \gamma_B(b)
\ \ , \ \
(\Gamma v , \Gamma w) = \gamma_B(v,w) \in B \ ;
\]
(ii) $\Pi$ acts on $H$ by isometric, invertible operators commuting with $\Gamma$, 
such that
\[
g(vb) = (gv)(gb)
\ \ , \ \
g(v,w) = (gv,gw) \in B
\ .
\]
Let now $\sA,\sB$ be graded $\Pi$-$C_0(X)$-algebras, eventually endowed with the trivial 
grading. A even/odd $\Pi$-{\em Kasparov} $\sA$-$\sB$-bimodule, denoted by $(\eta,\phi)$, 
is given by:
(i) a graded/ungraded $\Pi$-Hilbert $\sB$-bimodule $H$ carrying a graded/ungraded $\Pi$-representation
$\eta : \sA \to B(H)$
(here $B(H)$ is the $\Pi$-algebra of adjointable, right $\sB$-module operators), 
such that
\[
\eta(fa)vb = \eta(a)v(fb)
\ \ , \ \
\forall v \in H
\ , \ 
a \in \sA
\ , \ 
b \in \sB
\ , \
f \in C_0(X)
\ .
\]
(ii) a $\Pi$-continuous 
{\footnote{With this we mean that the map $\Pi \to B(H)$, $g \mapsto g \phi$, is norm-continuous.
           Clearly this is always true for discrete groups.}} 
odd/ungraded operator $\phi \in B(H)$ such that
\begin{equation}
\label{eq.KK.khom}
\left\{
\begin{array}{ll}
(\phi - \phi^*)\eta(t) 
\ , \
(\phi^2 - 1)\eta(t)
\ , \
[\phi,\eta(t)] 
\ , \
(g \phi-\phi)\eta(t) \ \in K(H)
\ ,
\\
\forall t \in \sA
\ , \
g \in \Pi
\ ,
\end{array}
\right.
\end{equation}
where $K(H) \subseteq B(H)$ is the ideal of compact $\sB$-module operators.
We denote the set of even/odd $\Pi$-{\em Kasparov} $\sA$-$\sB$-bimodules by $E_\Pi^*(\sA,\sB)$,
where $*=0$ in the even case and $*=1$ in the odd case.
When $\sB = C_0(X)$, $H$ is, in essence, a (separable) continuous field of Hilbert spaces.

We say that $(\eta_0,\phi_0) , (\eta_1,\phi_1) \in E_\Pi^*(\sA,\sB)$ are homotopic
whenever there is $(\eta,\phi) \in E_\Pi^*(\sA,\sB \otimes C([0,1]))$ such that
$(\eta_0,\phi_0) , (\eta_1,\phi_1)$ are obtained by applying to $(\eta,\phi)$ 
the evaluation morphism 
$\sB \otimes C([0,1]) \to \sB$
over $0,1 \in [0,1]$ respectively.

The representable $KK$-theory $RKK_\Pi^*(X;\sA,\sB)$ is defined as the abelian group
of homotopy classes of $\Pi$-Kasparov $\sA$-$\sB$-bimodules w.r.t. the
operation of direct sum. When $\Pi$ is trivial it will be omitted in the notation.
In particular, we define
\[
RK^*_\Pi(\sA) := RKK_\Pi^*(X;\sA,C_0(X)) 
\ \ , \ \
RK_*^\Pi(\sA) := RKK_\Pi^*(X;C_0(X),\sA) 
\ , \
*=0,1
\ ;
\]
these groups are called the {\em representable $K$-homology of $\sA$}
and, respectively, the {\em representable $K$-theory of $\sA$}. 
By \cite[Prop.2.21]{Kas88}, the Kasparov product induces the pairing
\begin{equation}
\label{index}
\left \langle \cdot , \cdot \right \rangle :
RK_*^\Pi(\sA) \times RK^*_\Pi(\sA) \to RK^0_\Pi(X) := RK_0^\Pi(C(X)) \ ,
\end{equation}
in essence the map defined by the index of continuous families of Fredholm operators.
When $X$ is compact $RK^0_\Pi(X)$ is the equivariant 
topological $K$-theory (see \cite[Prop.2.20]{Kas88}).

When $X$ reduces to a point we deal with $\rC^*$-algebras $A,B$ instead of $C_0(X)$-algebras 
$\sA,\sB$ (in particular, $H$ is simply a Hilbert space when $B = \bC$)
and obtain the usual notion of Kasparov bimodule, so we use the standard notations
\[
KK_\Pi^*(A,B) \ \ , \ \ K^*_\Pi(A) := KK_\Pi^*(A,\bC) \ \ , \ \ K_*^\Pi(A) := KK_\Pi^*(\bC,A) \ .
\]
Considering Kasparov $A$-$\bC$-bimodules $(\eta,\phi)$ such that $\phi$ is invariant,
i.e. $g \phi = \phi$ for all $g \in \Pi$, yields the {\em strongly equivariant $K$-homology} $\widetilde K^*_\Pi(A)$. 
When $\Pi$ is compact, averaging $\phi$ with respect to the Haar measure we obtain 
the invariant operator $\ovl \phi$ and this yields an isomorphism
$K^*_\Pi(A) \simeq \widetilde K^*_\Pi(A)$
(see \cite[V.11]{Bla}).
%
For $\Pi$ non-compact, the canonical map
\begin{equation}
\label{eq.bohr}
\widetilde K^*_\Pi(A) \, \to \, K^*_\Pi(A)
\end{equation}
may be not injective, as the homotopy equivalence relation on Kasparov bimodules is restricted to invariant operators.
%

\smallskip

For $A=B=\bC$ we define $R(\Pi) := KK^0_\Pi(\bC,\bC)$.
When $\Pi$ is compact $R(\Pi)$ is the usual representation ring $\widetilde R(\Pi)$.
When $\Pi$ is discrete there is an isomorphism $R(\Pi) \simeq K^0(C^*\Pi)$, so (\ref{index}) takes the form
\begin{equation}
\label{Gindex}
\left \langle \cdot , \cdot \right \rangle :
K_*^\Pi(A) \times K^*_\Pi(A) \to R(\Pi) \ .
\end{equation}


\end{document}